\documentclass[a4paper,twoside]{article}

\setcounter{page}{1}
\def\shorttitle{Weak Unique Continuation Property for Fractional Diffusion
Equations}
\def\shortauthor{D. Jiang, Z. Li, Y. Liu and M. Yamamoto}

\usepackage{amsfonts}
\usepackage{amsmath}
\usepackage{amssymb}
\usepackage{amscd}
\usepackage{amsbsy}
\usepackage{amsthm}
\usepackage{bm}
\usepackage{empheq}
\usepackage{graphicx}
\usepackage{psfrag}
\usepackage{color}
\usepackage{cite}
\usepackage{multicol}
\usepackage[normalem]{ulem}

\allowdisplaybreaks[4]

\textwidth 145mm \textheight 220mm \oddsidemargin 7mm
\evensidemargin 7mm \headsep 4mm \footskip 11mm
\newfont{\myfnt}{cmssi10 scaled 1440}
\numberwithin{equation}{section}
\catcode`@=11
\def\ps@nk{\def\@oddhead{\vbox{\hbox to \hsize{\pic \footnotesize \it \shorttitle
\hfill \rm \thepage} \vspace{1mm} \vspace*{-2mm}}}
\def\@evenhead{\vbox{\hbox to \hsize{\pic \footnotesize \rm \thepage \hfill \it \shortauthor}
\vspace{1mm} \vspace*{-2mm}}}
\def\@oddfoot{} \def\@evenfoot{}}
\def\ps@first{\def\@oddhead{\vbox{\hbox to \hsize{\pic \footnotesize
} \break}}
\def\@oddfoot{} \def\@evenfoot{}}
\newtheoremstyle{thmstyle}
  {6pt}
  {6pt}
  {\it}
  {}
  {\bf}
  {}
  {.5em}
  {}
\newtheoremstyle{remstyle}
  {6pt}
  {6pt}
  {\rm}
  {}
  {\bf}
  {}
  {.5em}
  {}
\def\Section#1{\Sec{\large #1} \setcounter{equation}{0} \vskip -6mm \indent}
\def\Sec{\@Startsection{section}{1}{\z@}
                                   {-3.5ex \@plus -1ex \@minus -.2ex}%
                                   {2.3ex \@plus.2ex}%
                                   {\normalfont\large\bfseries\boldmath}}
\def\@Startsection#1#2#3#4#5#6{%
  \if@noskipsec \leavevmode \fi
  \par
  \@tempskipa #4\relax
  \@afterindenttrue
  \ifdim \@tempskipa <\z@
    \@tempskipa -\@tempskipa \@afterindentfalse
  \fi
  \if@nobreak
    \everypar{}%
  \else
    \addpenalty\@secpenalty\addvspace\@tempskipa
  \fi
  \@ifstar
    {\@ssect{#3}{#4}{#5}{#6}}%
    {\@dblarg{\@Sect{#1}{#2}{#3}{#4}{#5}{#6}}}}
\def\@Sect#1#2#3#4#5#6[#7]#8{%
  \ifnum #2>\c@secnumdepth
    \let\@svsec\@empty
  \else
    \refstepcounter{#1}%
    \protected@edef\@svsec{\@seccntformat{#1}\relax}%
  \fi
  \@tempskipa #5\relax
  \ifdim \@tempskipa>\z@
    \begingroup
      #6{%
          \@hangfrom{\hskip #3\relax\@svsec \hskip -2.5mm}%
          \interlinepenalty \@M #8\@@par}
    \endgroup
    \csname #1mark\endcsname{#7}%
    \addcontentsline{toc}{#1}{%
      \ifnum #2>\c@secnumdepth \else
        \protect\numberline{\csname the#1\endcsname}%
      \fi
      #7}%
  \else
    \def\@svsechd{%
      #6{\hskip #3\relax
      \@svsec #8}%
      \csname #1mark\endcsname{#7}%
      \addcontentsline{toc}{#1}{%
        \ifnum #2>\c@secnumdepth \else
          \protect\numberline{\csname the#1\endcsname}%
        \fi
        #7}}%
  \fi
  \@xsect{#5}}
\renewenvironment{abstract}{%
        \small
        \quotation
         \noindent {\bfseries \abstractname } }%
      {\if@twocolumn\else\endquotation\fi}

\def\Subsec{\@StartSubsection{subsection}{2}{\z@}%
                                     {-3.25ex\@plus -1ex \@minus -.2ex}%
                                     {1.5ex \@plus .2ex}%
                                     {\normalfont\normalsize\bfseries\boldmath}}
\def\@StartSubsection#1#2#3#4#5#6{%
  \if@noskipsec \leavevmode \fi
  \par
  \@tempskipa #4\relax
  \@afterindenttrue
  \ifdim \@tempskipa <\z@
    \@tempskipa -\@tempskipa \@afterindentfalse
  \fi
  \if@nobreak
    \everypar{}%
  \else
    \addpenalty\@secpenalty\addvspace\@tempskipa
  \fi
  \@ifstar
    {\@ssect{#3}{#4}{#5}{#6}}%
    {\@dblarg{\@SubSect{#1}{#2}{#3}{#4}{#5}{#6}}}}
\def\@SubSect#1#2#3#4#5#6[#7]#8{%
  \ifnum #2>\c@secnumdepth
    \let\@svsec\@empty
  \else
    \refstepcounter{#1}%
    \protected@edef\@svsec{\@seccntformat{#1}\relax}%
  \fi
  \@tempskipa #5\relax
  \ifdim \@tempskipa>\z@
    \begingroup
      #6{%
          \@hangfrom{\hskip #3\relax\@svsec\hskip -1.5mm}%
          \interlinepenalty \@M #8\@@par}
    \endgroup
    \csname #1mark\endcsname{#7}%
    \addcontentsline{toc}{#1}{%
      \ifnum #2>\c@secnumdepth \else
        \protect\numberline{\csname the#1\endcsname}%
      \fi
      #7}%
  \else
    \def\@svsechd{%
      #6{\hskip #3\relax
      \@svsec #8}%
      \csname #1mark\endcsname{#7}%
      \addcontentsline{toc}{#1}{%
        \ifnum #2>\c@secnumdepth \else
          \protect\numberline{\csname the#1\endcsname}%
        \fi
        #7}}%
  \fi
  \@xsect{#5}}
\def\list#1#2{\ifnum \@listdepth >5\relax \@toodeep \else \global
\advance \@listdepth\@ne \fi \rightmargin \z@ \listparindent\z@
\itemindent\z@ \csname @list\romannumeral\the\@listdepth\endcsname
\def\@itemlabel{#1}\let\makelabel\@mklab \@nmbrlistfalse #2\relax
\@trivlist \parskip 0pt \parindent\listparindent \advance \linewidth
-\rightmargin \advance\linewidth -\leftmargin \advance\@totalleftmargin
\leftmargin \parshape \@ne \@totalleftmargin \linewidth \ignorespaces}
\renewcommand{\@makecaption}[2]{\begin{center}#1. #2\end{center}}
\catcode`@=12 \pagestyle{nk}
\theoremstyle{thmstyle}
\newtheorem{thm}{\indent Theorem}[section]
\newtheorem{lem}[thm]{\indent Lemma}

\newtheorem{defi}[thm]{\indent Definition}

\newtheorem{prob}[thm]{\indent Problem}
\theoremstyle{remstyle}
\newtheorem{rem}[thm]{\indent Remark}
\newtheorem{algo}[thm]{\indent Algorithm}
\newtheorem{ex}[thm]{\indent Example}

\newsavebox{\mygraphic}
\def\pic{\begin{picture}(0,0) \put(-210,-1250){\usebox{\mygraphic}} \end{picture}}
\newfont{\HUGEbf}{cmbx10 scaled 3500}
\definecolor{gray}{rgb}{0.9,0.9,0.9}
\def\thebibliography#1{\section*{\bf \large References}
\list{[\arabic{enumi}]} {\settowidth \labelwidth{[#1]} \leftmargin
\labelwidth \advance \leftmargin \labelsep \usecounter{enumi}}
\def\newblock{\hskip .11em plus .33em minus .07em} \footnotesize \sloppy \clubpenalty
4000 \widowpenalty 4000 \sfcode`\.=1000 \relax}

\def\BC{\mathbb C}

\def\BR{\mathbb R}
\def\cA{\mathcal A}
\def\cD{\mathcal D}

\def\cK{\mathcal K}
\def\cL{\mathcal L}
\def\cP{\mathcal P}
\def\cR{\mathcal R}

\def\rd{\mathrm d}
\def\rRe{\mathrm{Re}}

\def\e{\mathrm e}
\def\err{\mathrm{err}}
\def\rand{\mathrm{rand}}

\def\true{\mathrm{true}}
\def\Ga{\Gamma}

\def\Om{\Omega}
\def\al{\alpha}
\def\be{\beta}
\def\ga{\gamma}
\def\de{\delta}
\def\ep{\epsilon}
\def\ve{\varepsilon}
\def\te{\theta}
\def\ze{\zeta}
\def\ka{\kappa}
\def\la{\lambda}

\def\vp{\varphi}
\def\om{\omega}
\def\f{\frac}
\def\nb{\nabla}
\def\ov{\overline}
\def\pa{\partial}
\def\wh{\widehat}
\def\wt{\widetilde}
\def\tri{\triangle}
\def\beqnx{\begin{eqnarray*}} \def\eqnx{\end{eqnarray*}}

\baselineskip 4.5pt
\parindent 5mm
\abovedisplayskip=4pt plus 1pt minus 1pt
\abovedisplayshortskip=4pt plus 1pt minus 1pt
\belowdisplayskip=4pt plus 1pt minus 1pt
\belowdisplayshortskip=4pt plus 1pt minus 1pt
\markboth{\shortauthor \quad \shorttitle}{}
\baselineskip 14pt

\theoremstyle{definition}

\numberwithin{equation}{section}

\title{\Large\bf\boldmath Weak Unique Continuation Property and a Related
Inverse Source Problem for Time-Fractional Diffusion-Advection Equations}

\author{\large Daijun JIANG$^\dag$\quad Zhiyuan LI$^\ddag$\quad Yikan
LIU$^\ddag$ \quad Masahiro YAMAMOTO$^\ddag$}

\date{}

\begin{document}

\maketitle

\thispagestyle{first}
\renewcommand{\thefootnote}{\fnsymbol{footnote}}

\footnotetext{\hspace*{-5mm} \begin{tabular}{@{}r@{}p{14cm}@{}} &
Manuscript last updated: \today.\\
$^\dag$ & School of Mathematics and Statistics $\&$ Hubei Key Laboratory of
Mathematical Sciences, Central China Normal University, Wuhan 430079, China.
E-mail: jiangdaijun@mail.ccnu.edu.cn.\\
$^\ddag$ & Graduate School of Mathematical Sciences, The University of Tokyo,
3-8-1 Komaba, Meguro-ku, Tokyo 153-8914, Japan. E-mail: zyli@ms.u-tokyo.ac.jp,
ykliu@ms.u-tokyo.ac.jp, myama@ms.u-tokyo.ac.jp
\end{tabular}}

\renewcommand{\thefootnote}{\arabic{footnote}}

\begin{abstract}
In this paper, we first establish a weak unique continuation property for
time-fractional diffusion-advection equations. The proof is mainly based on the
Laplace transform and the unique continuation properties for elliptic and
parabolic equations. The result is weaker than its parabolic counterpart in the
sense that we additionally impose the homogeneous boundary condition. As a
direct application, we prove the uniqueness for an inverse problem on
determining the spatial component in the source term in by interior
measurements. Numerically, we reformulate our inverse source problem as an
optimization problem, and propose an iteration thresholding algorithm. Finally,
several numerical experiments are presented to show the accuracy and efficiency
of the algorithm.

\vskip 4.5mm

\noindent\begin{tabular}{@{}l@{ }p{10cm}} {\bf Keywords } & fractional
diffusion equation, weak unique continuation, inverse source problem, iterative
thresholding algorithm
\end{tabular}

\vskip 4.5mm

\noindent{\bf AMS Subject Classifications } 35R11, 26A33, 35R30

\end{abstract}

\baselineskip 14pt

\setlength{\parindent}{1.5em}

\setcounter{section}{0}

\Section{Introduction}

Let $\Om\subset\BR^d$ be an open bounded domain with a sufficiently smooth
boundary (e.g., of $C^2$-class) and $T>0$. Let $m\in\{1,2,\ldots\}$ and
$\al_j,q_j$ ($j=1,2,\ldots,m$) be positive constants such that
$1>\al_1>\al_2>\cdots>\al_m>0$. By $\pa_t^{\al_j}$ we denote the Caputo
derivative (see, e.g., \cite[\S 2.4.1]{P99})
\[
\pa_t^{\al_j}g(t)
:=\f1{\Ga(1-\al_j)}\int_0^t\f{g'(\tau)}{(t-\tau)^{\al_j}}\,\rd\tau,
\]
where $\Ga(\,\cdot\,)$ stands for the Gamma function. For
$(x,t)\in Q:=\Om\times(0,T)$, we define the operator
\begin{equation}\label{eq-def-P}
\cP u(x,t):=\sum_{j=1}^mq_j\pa_t^{\al_j}u(x,t)+\cA u(x,t)+B(x)\cdot\nb u(x,t).
\end{equation}
Here $\cA$ is a symmetric second-order elliptic operator which will be defined
at the beginning of Section \ref{sec-pre}, and
$B(x)=(b_1(x),b_2(x),\ldots,b_d(x))$. Without loss of generality, we set
$q_1=1$. In this paper, we investigate the following initial-boundary value
problem for the time-fractional diffusion-advection equation
\begin{equation}\label{eq-ibvp-u}
\begin{cases}
\cP u=F & \mbox{in }Q,\\
u=a & \mbox{in }\Om\times\{0\},\\
u=0\mbox{ or }\pa_\cA u=0 & \mbox{on }\pa\Om\times(0,T),
\end{cases}
\end{equation}
where $\pa_\cA u$ denotes the normal derivative associated with the elliptic
operator $\cA$. The conditions on the initial data $a$, the source term $F$,
coefficients involved in $\cP$ and the definitions of $\pa_\cA$ will be
specified later in Section \ref{sec-pre}.

In various forms and generalities, the time-fractional parabolic operator $\cP$
in \eqref{eq-def-P} has gained increasing popularity among mathematicians
within the last few decades, owing to its applicability in describing the
anomalous diffusion phenomena in highly heterogeneous media (see
\cite{AG92,HH98} and the references therein). The fundamental theory for the
single-term (i.e., $m=1$) case of \eqref{eq-def-P} was established around the
early 2010s, represented by the maximum principle proved in Luchko \cite{L10}
and the well-posedness, analyticity and asymptotic behavior proved in Sakamoto
and Yamamoto \cite{SY11a}. Thereafter, most of the properties were parallelly
generalized to the multi-term case (i.e., $m>1$) in \cite{L11,LLY15,LY13}, and
especially the maximum principle was recently improved to stronger ones in
\cite{LRY16,L15}. Meanwhile, corresponding numerical methods have also been
well-developed and we refer e.g.\! to \cite{JLZ13,JLLZ15}. In contrast to the
usual parabolic equations characterized by the exponential decay in time and
Gaussian profile in space, it reveals that the fractional diffusion equations
driven by $\cP$ possess properties of slow decay in time and long-tailed
profile in space. Nevertheless, we notice that most of the existing literature
only treated the symmetric elliptic operator (i.e., $B\equiv0$ in
\eqref{eq-def-P}), in which the existence of eigensystem provides convenience
for the argument.

Other than the above mentioned aspects, the unique continuation property is
also one of the remarkable characterizations of parabolic equations, which
asserts the vanishment of a solution to a homogeneous problem in an open subset
implies its vanishment in the whole domain (see, e.g., \cite{SS87}). The unique
continuation property is not only important by itself, but also significant in
its applications to many related control and inverse problems. However, the
publications on its generalization to fractional diffusion equations are rather
limited to the best of the authors' knowledge. For the special half-order
fractional diffusion equation (i.e., $m=1$, $\al_1=\f12$ and $\cA=-\tri$ in
\eqref{eq-def-P}), the unique continuation property was proved in Xu, Cheng and
Yamamoto \cite{XCY11} for $d=1$ and Cheng, Lin and Nakamura \cite{CLN13} for
$d=2$ via Carleman estimates for the operator $\pa_t-\tri^2$. For a general
fractional order in the $(0,1)$ interval, Lin and Nakamura \cite{LN16} recently
obtained a unique continuation property by using a newly established Carleman
estimate based on calculus of pseudo-differential operators. We notice that the
conclusion in \cite{LN16} requires the homogeneous initial condition, which
possibly roots in the memory effect of time-fractional diffusion equations.

Regarding the unique continuation property, the first focus of this paper is
the investigation of the following problem.

\begin{prob}\label{prob-wuc}
Let $u$ be the solution of \eqref{eq-ibvp-u}, where the source term $F=0$. Then
does $u=0$ in some open subset of $Q=\Om\times(0,T)$ implies $u\equiv0$ in $Q$
under certain conditions?
\end{prob}

In Theorem \ref{thm-wuc}, we will give an affirmative answer to this problem.
Compared with the existing literature, we formulate the problem on the more
general time-fractional parabolic operator $\cP$ with non-symmetric elliptic
part in space. Meanwhile, we allow non-vanishing initial data at the cost of
the homogeneous Dirichlet or Neumann boundary condition.

On the other hand, parallelly with the intensive attention paid to forward
problems for time-fractional diffusion equations, there are also rapidly
growing publications on the related inverse problems with various combinations
of unknown functions and observation data. Here we do not intend to give a full
list of bibliographies, but just mention
\cite{CNYY09,LZJY13,MY13,Z16,LY15,LIY16} and the references therein for
readers' curiosity. Nevertheless, it turns out that the majority of them
concentrate on coefficient inverse problems. In contrast, the study on inverse
source problems is far from satisfactory and mainly restricts to several
special cases due to the lack of specified techniques. In the one-dimensional
case, Zhang and Xu \cite{ZX11} proved the uniqueness for determining a
time-independent source term by the partial boundary data, and a conditional
stability for the recovery of the spatial component in the source term was
proved for the half-order case in Yamamoto and Zhang \cite{YZ12}. With the
final overdetermining data, Sakamoto and Yamamoto \cite{SY11b} showed the
generic well-posedness for reconstructing the spatial component. Similarly to
the situation of the forward problems reviewed above, it reveals that almost
all papers treating the related inverse problems also rely heavily on the
symmetry of the involved elliptic operator, regardless of the practical
importance of the non-symmetric case.

Keeping the above points in mind, we are also interested in studying the
following inverse source problem, which is the second focus of this paper.

\begin{prob}\label{prob-isp}
Let $u$ be the solution of \eqref{eq-ibvp-u}, where the initial data $a=0$ and
the source term takes the form of separated variables, namely
$F(x,t)=f(x)\,\mu(t)$. Provided that the temporal component $\mu(t)$
$(0\le t\le T)$ is known, can we uniquely determine the spatial component
$f(x)$ $(x\in\Om)$ by the partial interior observation of $u$ in some open
subset of $Q=\Om\times(0,T)$ under certain conditions?
\end{prob}

Theorem \ref{thm-isp} answers this problem affirmatively. Obviously, the above
problem is closely related to Problem \ref{prob-wuc} in the sense that both are
concerned with the partial interior information of the solution. Practically,
the formulation of Problem \ref{prob-isp} is applicable in the determination of
the space distribution $f$ modeling the contaminant source, where the anomalous
diffusion phenomena is described by \eqref{eq-ibvp-u} and the time evolution
$\mu$ of the contaminant is known in advance. As far as the authors know, the
above problem has not yet been considered in form of the generalized
time-fractional parabolic operator $\cP$.

By restricting the open subset in Problems \ref{prob-wuc}--\ref{prob-isp} as a
cylindrical subdomain, first we will give an affirmative answer to Problem
\ref{prob-wuc} in two cases, that is, either the multi-term fractional
diffusion equation without an advection term or the single-term one with an
advection term. The statement concluded in Theorem \ref{thm-wuc} will be called
as the weak unique continuation property because we impose the homogeneous
Dirichlet or Neumann boundary condition, which is absent in the usual parabolic
prototype. As a direct application, the uniqueness for Problem \ref{prob-isp}
can be immediately proved with the aid of a fractional version of Duhamel's
principle. For the numerical reconstruction, we reformulate Problem
\ref{prob-isp} as an optimization problem with Tikhonov regularization. After
the derivation of the corresponding variational equation, we can characterize
the minimizer by employing the associated backward fractional diffusion
equation, which results in an efficient iterative method.

The remainder of this paper is organized as follows. Preparing all necessities
about the weak solution of \eqref{eq-ibvp-u}, in Section \ref{sec-pre} we state
the main results answering Problems \ref{prob-wuc} and \ref{prob-isp} in
Theorems \ref{thm-wuc} and \ref{thm-isp}, respectively. Then Section
\ref{sec-proof} is devoted to the proofs of the above theorems. In Section
\ref{sec-ita}, we propose the iterative thresholding algorithm for the
numerical treatment of our inverse source problem, followed by several
numerical examples illustrating the performance of the proposed method in
Section \ref{sec-numer}. As technical details, we provide the proofs for the
well-posedness of the weak solutions of \eqref{eq-ibvp-u} in Appendix
\ref{sec-app}.

\Section{Preliminaries and Main Results}\label{sec-pre}

In this section, we first set up notations and terminologies, and review some
of standard facts on the fractional calculus. Let $L^2(\Om)$ be a usual
$L^2$-space with the inner product $(\,\cdot\,,\,\cdot\,)$ and $H_0^1(\Om)$,
$H^2(\Om)$, etc.\! denote the usual Sobolev spaces. Especially, for
$\be\in(0,1)$ we define the fractional Sobolev space $H^\be(0,T)$ in time (see
Adams \cite{A75}). The elliptic operator $\cA$ is defined for
$\psi\in\cD(\cA):=\{\psi\in H^2(\Om);\,\psi=0\mbox{ on }\pa\Om\}$ or
$\{\psi\in H^2(\Om);\,\pa_\cA\psi=0\mbox{ on }\pa\Om\}$ as
\[
\cA\psi(x):=-\sum_{i,j=1}^d\pa_j(a_{ij}(x)\pa_i\psi(x))+c(x)\psi(x),
\]
where $\pa_\cA\psi(x):=\sum_{i,j=1}^da_{ij}(x)\nu_i(x)\pa_j\psi(x)$ and
$\nu(x)=(\nu_1(x),\ldots,\nu_d(x))$ denotes the outward unit normal vector to
$\pa\Om$ at $x\in\pa\Om$. Here we assume $a_{ij}=a_{ji}\in C^1(\ov\Om)$
($1\le i,j\le d$), $c\in L^\infty(\Om)$ and there exists a constant $\ka>0$
such that
\[
\sum_{i,j=1}^da_{ij}(x)\xi_i\xi_j\ge\ka\sum_{i=1}^d\xi_i^2,
\quad\forall\,x\in\ov\Om,\ \forall\,(\xi_1,\ldots,\xi_d)\in\BR^d.
\]
When the zeroth order coefficient $c\ge0$ in $\Om$, we introduce the
eigensystem $\{(\la_n,\vp_n)\}_{n=1}^\infty$ of $\cA$ such that
$0\le\la_1<\la_2\le\cdots$, $\la_n\to\infty$ ($n\to\infty$) and
$\{\vp_n\}\subset\cD(\cA)$ forms a complete orthonormal basis of $L^2(\Om)$.
Considering the possibility of $\la_1=0$, we define $\wt\cA:=\cA+1$. Then the
corresponding eigenvalues $\wt\la_n:=\la_n+1$ are all strictly positive, and
the fractional power $\wt\cA^\ga$ is defined for $\ga\in\BR$ (e.g., \cite{P83})
as
\[
\wt\cA^\ga\psi=\sum_{n=1}^\infty\wt\la_n^\ga(\psi,\vp_n)\vp_n,
\]
where
\[
\psi\in\cD(\wt\cA^\ga):=\left\{\psi\in L^2(\Om);
\sum_{n=1}^\infty\wt\la_n^{2\ga}|(\psi,\vp_n)|^2<\infty\right\}
\]
and $\cD(\wt\cA^\ga)$ is a Hilbert space with the norm
\[
\|\psi\|_{\cD(\wt\cA^\ga)}
=\left(\sum_{n=1}^\infty\left|\wt\la_n^\ga(\psi,\vp_n)\right|^2\right)^{1/2}.
\]
On the other hand, the first order coefficient $B=(b_1,\ldots,b_d)$ in the
operator $\cP$ is assumed to be in $(L^\infty(\Om))^d$.

By $J_{0+}^\al$ we denote the Riemann-Liouville integral operator, which is
defined by
\[
J_{0+}^\al g(t):=\f1{\Ga(\al)}\int_0^t\f{g(\tau)}{(t-\tau)^{1-\al}}\,\rd\tau,
\quad\al>0.
\]
Then the Caputo derivative $\pa_t^\al$ can be rephrased as
$\pa_t^\al g(t)=J_{0+}^{1-\al}\f\rd{\rd t}g(t)$. Parallelly, we define the
backward Riemann-Liouville integral operator $J^\al_{T-}$ by
\[
J^\al_{T-}g(t)=\f1{\Ga(\al)}\int_t^T \f{g(\tau)}{(\tau-t)^{1-\al}}\,\rd\tau,
\]
and the backward Riemann-Liouville fractional derivative by
$D_t^\al g(t):=-\f\rd{\rd t}J^{1-\al}_{T-}g(t)$.

First we state the well-posedness result for the homogeneous case of the
initial-boundary value problem \eqref{eq-ibvp-u}.

\begin{lem}\label{lem-fp-homo}
Assume $F=0$, $a\in L^2(\Om)$ and let $\ga\in(3/4,1)$ be a given constant. Then
there exists a unique solution
$u\in C((0,T];\cD(\wt\cA^\ga))\cap C([0,T];L^2(\Om))$ to the problem
\eqref{eq-ibvp-u}. Moreover, the solution
$u:(0,T)\longrightarrow\cD(\wt\cA^\ga)$ is analytic and can be analytically
extended to $(0,\infty)$. Further, there exists a constant
$C=C(d,\al_j,q_j,\cA,B,\ga)>0$ such that
\[
\|u(\,\cdot\,,t)\|_{\cD(\wt\cA^\ga)}
\le C\,\e^{CT}t^{-\al_1\ga}\|a\|_{L^2(\Om)},\quad 0<t<T.
\]
\end{lem}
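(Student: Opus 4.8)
\textbf{Proof proposal for Lemma \ref{lem-fp-homo}.}

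The plan is to reduce the equation with the advection term $B(x)\cdot\nb$ to a perturbation of the symmetric multi-term problem whose solution theory is already understood via the eigenfunction expansion. First, I would treat separately the two cases built into the operator $\cP$. In the purely multi-term symmetric case ($B\equiv0$), one writes the solution explicitly as
\[
u(\,\cdot\,,t)=\sum_{n=1}^\infty (a,\vp_n)\,u_n(t)\,\vp_n,
\]
where each scalar coefficient $u_n$ solves the multi-term fractional ODE
$\sum_{j=1}^m q_j\,\pa_t^{\al_j}u_n(t)+\la_n u_n(t)=0$ with $u_n(0)=1$; this $u_n$ is given by a generalized Mittag-Leffler-type function whose Laplace transform is $\big(\sum_j q_j s^{\al_j}+\la_n\big)^{-1}$. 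The key analytic input I would invoke (from \cite{LLY15} or the references listed) is the uniform bound $|u_n(t)|\le C/(1+\la_n t^{\al_1})$ for $t>0$, together with analyticity of $u_n$ in $t$ on a sector. Multiplying by $\wt\la_n^{\ga}$, squaring, and summing, the bound $\wt\la_n^{\ga}|u_n(t)|\le C\,\wt\la_n^{\ga}/(1+\la_n t^{\al_1})\le C\,t^{-\al_1\ga}$ (uniformly in $n$, after a short interpolation using $\ga<1$) yields the stated estimate $\|u(\cdot,t)\|_{\cD(\wt\cA^\ga)}\le C\,t^{-\al_1\ga}\|a\|_{L^2(\Om)}$, and term-by-term analyticity plus this uniform majorant gives analyticity of $t\mapsto u(\cdot,t)$ into $\cD(\wt\cA^\ga)$, with the same expansion converging on all of $(0,\infty)$.

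For the single-term case with advection ($m=1$, $B\not\equiv0$), the operator $\cA+B\cdot\nb$ is no longer self-adjoint, so I would instead argue by Laplace transform directly on the PDE. Taking the Laplace transform in $t$ turns \eqref{eq-ibvp-u} (with $F=0$) into the resolvent equation
\[
q_1 s^{\al_1}\wh u(\,\cdot\,,s)+\cA\wh u(\,\cdot\,,s)+B\cdot\nb\wh u(\,\cdot\,,s)=s^{\al_1-1}a
\quad\text{in }\Om,
\]
with the homogeneous boundary condition. The point is that for $s$ in a sector $\{|\arg s|<\pi/2+\de_0\}$ with $|s|$ bounded below, the operator $s^{\al_1}+\cA+B\cdot\nb$ is boundedly invertible from $\cD(\wt\cA^\ga)$-type spaces, because the lower-order perturbation $B\cdot\nb$ is relatively bounded with respect to $\cA^{1/2}$ and can be absorbed once $|s|$ is large, while the elliptic estimates handle the rest; this gives $\|(s^{\al_1}+\cA+B\cdot\nb)^{-1}\|\lesssim |s|^{-\al_1}$ on the sector. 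Then $\wh u(\cdot,s)=s^{\al_1-1}(s^{\al_1}+\cA+B\cdot\nb)^{-1}a$ is analytic in $s$ on the sector and decays like $|s|^{-1}$, so one can invert the Laplace transform along a Hankel-type contour to recover $u$ as an analytic $\cD(\wt\cA^\ga)$-valued function on $(0,\infty)$, and deforming the contour to scale with $t$ produces the bound $t^{-\al_1\ga}$ with an extra harmless factor $\e^{CT}$ coming from the lower bound on $|s|$. Continuity down to $t=0$ in $L^2(\Om)$ follows from $\wh u(\cdot,s)\to 0$ appropriately and the dominated convergence in the inversion formula, or alternatively by an energy estimate.

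The main obstacle I expect is the resolvent estimate in the non-symmetric case: verifying that $s^{\al_1}+\cA+B\cdot\nb$ is invertible with norm $O(|s|^{-\al_1})$ uniformly on a sector $|\arg s|<\pi/2+\de_0$ requires care, since for $|s|$ of moderate size the perturbation $B\cdot\nb$ is not small and one cannot simply Neumann-expand. The way around this is to work on a sector with $|s|$ bounded \emph{below} by a large constant $s_0=s_0(\|B\|_{L^\infty},\cA)$ — which is exactly why the factor $\e^{CT}$ appears, as the contour in the inverse Laplace transform is then shifted into $\re s\ge s_0$ — and on that region the standard resolvent bound for the sectorial operator $\cA$ together with $\|B\cdot\nb \wt\cA^{-1/2}\|<\infty$ suffices. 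A secondary technical point is the interpolation showing $\sup_n \wt\la_n^{\ga}/(1+\la_n t^{\al_1})\le C t^{-\al_1\ga}$ for $\ga\in(3/4,1)$, but this is elementary calculus. The role of the restriction $\ga>3/4$ is not needed for this lemma itself (any $\ga<1$ works here) but is presumably imposed for consistency with later arguments where $\cD(\wt\cA^\ga)\hookrightarrow$ a space of continuous functions or where the advection term must be controlled.
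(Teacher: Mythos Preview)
The paper does not actually prove this lemma in full; Remark~\ref{rem-analy} simply says the argument is ``very similar to that of \cite[Theorem 4.1]{LY13}''.  Judging from the closely related computation carried out in Appendix~\ref{sec-app} for Lemma~\ref{lem-fp-weak}, the intended method is an integral-equation/fixed-point scheme rather than either of your two routes: one keeps only the \emph{leading} symmetric single-term piece $\pa_t^{\al_1}+\cA$, whose solution operator $S_{\al_1}(t)$ is given explicitly by the Mittag--Leffler eigenfunction expansion, and rewrites the full equation as
\[
u(\,\cdot\,,t)\;=\;\mbox{(explicit $S_{\al_1}$-evolution of $a$)}\;+\;(\cK-\cL)u(\,\cdot\,,t),
\]
where $\cK$ collects the lower-order fractional terms $q_j\pa_t^{\al_j}$ ($j\ge2$) and $\cL$ the advection $B\cdot\nb$.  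A contraction argument on a weighted space of $\cD(\wt\cA^\ga)$-valued functions then produces existence, the $t^{-\al_1\ga}$ bound, and analyticity (the latter inherited from the analyticity of $S_{\al_1}$ through the Volterra structure), all in one stroke for the \emph{general} multi-term non-symmetric operator $\cP$.  The factor $\e^{CT}$ arises naturally from iterating the contraction over subintervals.

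Your two sub-arguments are each reasonable for the cases they target --- the uniform bound $|u_n(t)|\le C/(1+\la_n t^{\al_1})$ from \cite{LLY15} does yield the estimate when $B\equiv0$, and the sectorial-resolvent/Hankel-contour inversion is a standard way to handle a single-term equation with a relatively bounded first-order perturbation.  But note that Lemma~\ref{lem-fp-homo} is stated for the full operator $\cP$ with $m\ge1$ \emph{and} $B\not\equiv0$ simultaneously, and your case split leaves that combination uncovered; neither of your arguments applies directly, since there is no eigenbasis and the multi-term Laplace symbol $\sum_j q_j s^{\al_j}$ complicates the contour geometry.  The perturbative scheme of \cite{LY13} is designed precisely to sidestep this.  (Your Laplace-transform argument could in principle be pushed through for general $m$ once one checks that $\sum_j q_j s^{\al_j}$ still lies in a good sector for $|s|$ large, but you have not done this.)  Your closing remark on $\ga>3/4$ is sensible; in the fixed-point approach one needs at least $\ga>1/2$ so that $B\cdot\nb:\cD(\wt\cA^\ga)\to L^2(\Om)$ is bounded, so the constraint is not purely cosmetic.
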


\begin{rem}\label{rem-analy}
The proof of Lemma \ref{lem-fp-homo} is very similar to that of
\cite[Theorem 4.1]{LY13}, which only treated the homogeneous Dirichlet boundary
condition. Moreover, we point out that in the case of $B\equiv0$ and $c\ge0$,
the regularity of the solution can be improved to $C((0,T];\cD(\wt\cA\,))$.
\end{rem}

Now we turn to the inhomogeneous problem, i.e., $a=0$ and $F\ne0$. Since
\cite[Theorem 1.1]{GLY15} asserts the $H^\al(0,T;L^2(\Om))$ regularity of the
solution, we see that the initial value becomes delicate in the case of
$\al\le1/2$ because the time-regularity does not make sense pointwisely
anymore. Following the same line as that in \cite{GLY15}, we shall redefine the
weak solution to \eqref{eq-ibvp-u}.

\begin{defi}[Weak solution]\label{def-fp-weak}
Let $F\in L^2(Q)$. We say that $u$ is a weak solution to the initial-boundary
value problem \eqref{eq-ibvp-u} with $a=0$ if
\[
u\in L^2(0,T;\cD(\wt\cA\,)),\quad J_{0+}^{-\al_1}u\in L^2(Q),
\quad\cP u=F\mbox{ in }L^2(Q).
\]
Here $J_{0+}^{-\al_1}$ denotes the inverse operator of the Riemann-Liouville
integral operator $J_{0+}^{\al_1}$.
\end{defi}

In Definition \ref{def-fp-weak}, we should understand the Caputo derivative
$\pa_t^{\al_j}$ ($j=1,2,\ldots,m$) in the operator $\cP$ as the unique
extension of the operator $\pa^{\al_j}_t:C^\infty(0,T)\to L^2(0,T)$ to
$H^{\al_j}(0,T)$ according to \cite{GLY15}.

Within this framework, we can prove the following well-posedness result.

\begin{lem}[Well-posedness of Definition \ref{def-fp-weak}]\label{lem-fp-weak}
Let $a=0$ and $F\in L^2(Q)$. Then the initial-boundary value problem
\eqref{eq-ibvp-u} admits a unique weak solution
$u\in L^2(0,T;\cD(\wt\cA\,))\cap H^{\al_1}(0,T;L^2(\Om))$. Moreover, there
exists a constant $C>0$ such that
\[
\|u\|_{H^{\al_1}(0,T;L^2(\Om))}+\|u\|_{L^2(0,T;\cD(\wt\cA\,))}
\le C\|F\|_{L^2(Q)}.
\]
\end{lem}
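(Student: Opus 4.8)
The plan is to reduce the inhomogeneous problem with $\cP$ to the better-studied symmetric, single-term setting and then absorb the extra terms by a fixed-point/Galerkin argument. First I would treat the model problem in which $\cP$ is replaced by $\pa_t^{\al_1}+\wt\cA$ (i.e. $m=1$, $B\equiv0$, and $\cA$ shifted so that the spectrum is positive); for this problem the existence, uniqueness and the a priori bound
\[
\|v\|_{H^{\al_1}(0,T;L^2(\Om))}+\|v\|_{L^2(0,T;\cD(\wt\cA\,))}\le C\|G\|_{L^2(Q)}
\]
is exactly \cite[Theorem 1.1]{GLY15} (combined with the redefinition of the weak solution quoted before the lemma). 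The operator there is $\pa_t^{\al_1}+\wt\cA$ acting on the class $\{v\in L^2(0,T;\cD(\wt\cA\,)):\,J_{0+}^{-\al_1}v\in L^2(Q)\}$, and it is proved to be an isomorphism onto $L^2(Q)$. I would record this as the ``base case'' and denote the solution operator by $\cK:G\mapsto v$, a bounded linear map $L^2(Q)\to L^2(0,T;\cD(\wt\cA\,))\cap H^{\al_1}(0,T;L^2(\Om))$.

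Next I would rewrite the full equation $\cP u=F$ as
\[
\pa_t^{\al_1}u+\wt\cA u=F-\sum_{j=2}^m q_j\pa_t^{\al_j}u-(\cA-\wt\cA)u+u-B\cdot\nb u-cu+\cdots,
\]
more precisely, collecting all the terms of $\cP$ other than $\pa_t^{\al_1}$ and the principal part of $\wt\cA$ into a ``remainder'' operator $\cR u$, so that the problem becomes the fixed-point equation $u=\cK(F-\cR u)$, i.e. $(I+\cK\cR)u=\cK F$. The key point is that $\cR$ is \emph{lower order} relative to $\cK$: the lower-order time derivatives $\pa_t^{\al_j}u$ ($j\ge2$, $\al_j<\al_1$) are controlled by $\|u\|_{H^{\al_1}(0,T;L^2(\Om))}$ via the interpolation/embedding $H^{\al_1}(0,T)\hookrightarrow H^{\al_j}(0,T)$ together with the smoothing factor one gains from the base estimate, and the first-order spatial term $B\cdot\nb u$ plus the zeroth-order terms are controlled by $\|u\|_{L^2(0,T;\cD(\wt\cA\,))}$ (since $\cD(\wt\cA)\subset H^2(\Om)\hookrightarrow H^1(\Om)$, with a gain of derivatives). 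Hence $\cK\cR$ maps the solution space into itself and, after a standard rescaling in $T$ (or a Gr\"onwall-type / Neumann-series argument exploiting that the relevant operator norm tends to $0$ as $T\to0$), $I+\cK\cR$ is invertible. This simultaneously yields existence, uniqueness, and the stated estimate with a constant of the form $C\,\e^{CT}$ absorbed into $C$.

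The main obstacle I anticipate is the rigorous handling of the lower-order fractional time derivatives $\pa_t^{\al_j}u$ for $j\ge2$ within the weak (negative-order) framework of Definition \ref{def-fp-weak}: one must verify that each $\pa_t^{\al_j}:H^{\al_1}(0,T)\to L^2(0,T)$ is not merely bounded but \emph{compact}, or at least has small norm on short time intervals, so that the Neumann series converges; this requires care because $\pa_t^{\al_j}$ does not map into a space with any spatial gain, only a temporal one. The resolution is to observe that, by \cite{GLY15}, the base solution operator $\cK$ actually sends $L^2(Q)$ into $H^{\al_1}(0,T;L^2(\Om))\cap L^2(0,T;\cD(\wt\cA\,))$ with the \emph{full} $\al_1$ order in time, so $\cK\cR$ gains a strictly positive amount of time-regularity ($\al_1-\al_2>0$) on the lower-order terms and a full two spatial derivatives minus one on the advection term; both gains make the composition a contraction on $(0,T_0)$ for $T_0$ small, and a finite iteration over $[0,T]$ (or a direct Gr\"onwall argument on the energy) closes the proof on the whole interval. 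Uniqueness follows by applying the same estimate to the difference of two solutions with $F=0$.
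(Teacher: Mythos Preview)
Your proposal is essentially the same strategy as the paper's proof: treat the principal part $\pa_t^{\al_1}+\wt\cA$ via the base result \cite{GLY15}, push the lower-order time derivatives $\sum_{j\ge2}q_j\pa_t^{\al_j}$ and the advection $B\cdot\nb$ to the right-hand side, and close by a perturbation argument on the solution space $H^{\al_1}(0,T;L^2(\Om))\cap L^2(0,T;\cD(\wt\cA\,))$. The paper packages this slightly differently: rather than inverting $I+\cK\cR$ by a Neumann series, it writes the integral equation $u=(\cK-\cL)u-\Psi$ with the explicit Mittag--Leffler kernel $S_{\al_1}$, proves that $\cK-\cL$ is \emph{compact} on $X_\eta$ (using the compact embeddings $X_\eta\hookrightarrow H^{\al_2}(0,\eta;L^2(\Om))$ and $X_\eta\hookrightarrow L^2(0,\eta;H^1(\Om))$), and then invokes the Fredholm alternative. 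The ``$1$ is not an eigenvalue'' step is exactly your short-time contraction: one shows $\|(\cK-\cL)w\|_{X_\eta}\le(C\eta^{\al_1-\al_2}+\ep+C_\ep\eta^{\al_1})\|w\|_{X_\eta}$, so $(\cK-\cL)w=w$ forces $w=0$ on $(0,\eta_\ep)$.

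The one point you gloss over, and which the paper treats carefully, is the ``finite iteration over $[0,T]$''. For integer-order equations one simply restarts at $t=\eta_\ep$, but the Caputo derivative remembers the whole history $[0,t]$, so this is not automatic. The paper verifies explicitly that if $w=0$ on $(0,\eta_\ep)$ then the shifted function $\wt w(\,\cdot\,,t)=w(\,\cdot\,,t+\eta_\ep)$ satisfies $\pa_t^{\al_j}w(\,\cdot\,,t+\eta_\ep)=\pa_t^{\al_j}\wt w(\,\cdot\,,t)$ and hence the \emph{same} fixed-point equation $(\cK-\cL)\wt w=\wt w$; this is what allows the step-by-step extension. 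Your Gr\"onwall alternative would also need this observation (or an equivalent one) to propagate uniqueness past $\eta_\ep$. Apart from this, your outline is correct and matches the paper's argument.
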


The proof of the above lemma will be given in Appendix \ref{sec-app}.

By Lemma \ref{lem-fp-homo} and the unique continuation for elliptic and
parabolic equations, we can establish the weak type unique continuation for the
fractional parabolic equation.

\begin{thm}\label{thm-wuc}
Let $a\in L^2(\Om)$, $F=0$ and $u$ be the solution to \eqref{eq-ibvp-u}. Let
$\om\subset\Om$ be an arbitrarily chosen open subdomain. Then
\[
u=0\mbox{ in }\om\times(0,T)\quad\mbox{implies}
\quad u=0\mbox{ in }\Om\times(0,T)
\]
in either of the following two cases.

{\rm{\bf Case 1}}\ \ $m=1$, i.e., $\cP$ is a single-term time-fractional
parabolic operator.

{\rm{\bf Case 2}}\ \ $B\equiv0$ and $c\ge0$ in $\Om$, i.e., the first order
coefficient in $\cP$ vanishes and the zeroth order one is non-negative.
\end{thm}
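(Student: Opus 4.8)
The plan is to reduce the weak unique continuation property for the time-fractional equation to classical unique continuation results for elliptic (and parabolic) equations by means of the Laplace transform in time. Since $F=0$ and the boundary condition is homogeneous, Lemma \ref{lem-fp-homo} guarantees that $u:(0,\infty)\to\cD(\wt\cA^\ga)$ is analytic, hence so is $u(\cdot,t)$ restricted to $\om$; because $u=0$ on $\om\times(0,T)$ and $t\mapsto u(x,t)$ is (real-)analytic on $(0,\infty)$ for a.e.\ $x$, analytic continuation in $t$ gives $u=0$ on $\om\times(0,\infty)$. The slow polynomial decay bound $\|u(\cdot,t)\|_{\cD(\wt\cA^\ga)}\le C\e^{CT}t^{-\al_1\ga}\|a\|_{L^2(\Om)}$ shows the Laplace transform
\[
\wh u(x,p):=\int_0^\infty \e^{-pt}u(x,t)\,\rd t
\]
is well-defined and holomorphic in $p$ for $\rRe p>0$ (the singularity $t^{-\al_1\ga}$ at $t=0$ is integrable since $\al_1\ga<1$). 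From $u\equiv0$ on $\om\times(0,\infty)$ we get $\wh u(\cdot,p)=0$ in $\om$ for all $\rRe p>0$.

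Next I would apply the Laplace transform to the equation $\cP u=0$. Using $\int_0^\infty\e^{-pt}\pa_t^{\al_j}u\,\rd t=p^{\al_j}\wh u-p^{\al_j-1}a$ (valid for the Caputo derivative under our regularity), the transformed equation becomes an elliptic equation in $x$ for each fixed $p$:
\[
\Big(\sum_{j=1}^m q_j p^{\al_j}\Big)\wh u(x,p)+\cA\wh u(x,p)+B(x)\cdot\nb\wh u(x,p)=\Big(\sum_{j=1}^m q_j p^{\al_j-1}\Big)a(x),\quad x\in\Om,
\]
with the homogeneous Dirichlet or Neumann boundary condition inherited from \eqref{eq-ibvp-u}. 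In \textbf{Case 1} ($m=1$), subtracting the equation at $p$ from a scaled version, or more simply differentiating in $p$ to kill the $a$-term, one arrives at an elliptic equation for $\wh u$ (or $\pa_p\wh u$) with bounded coefficients and no source, which vanishes on the open set $\om$; the strong unique continuation property for second-order elliptic operators with Lipschitz principal part and bounded lower-order coefficients then forces $\wh u(\cdot,p)\equiv0$ in $\Om$ for every $p$ with $\rRe p>0$. Inverting the Laplace transform yields $u\equiv0$ in $\Om\times(0,\infty)$, in particular on $Q$. In \textbf{Case 2} ($B\equiv0$, $c\ge0$), we may instead expand $u(\cdot,t)=\sum_n (a,\vp_n)\, E_n(t)\,\vp_n$ in the eigenbasis, where $E_n(t)$ is the relevant multinomial Mittag-Leffler function; Laplace transforming gives $\wh u(x,p)=\sum_n \frac{(\sum_j q_j p^{\al_j-1})(a,\vp_n)}{\sum_j q_j p^{\al_j}+\la_n}\vp_n(x)$, and $\wh u(\cdot,p)=0$ on $\om$ for a sequence of $p$'s accumulating at a point, combined with unique continuation for the elliptic operator $\cA+(\sum_j q_j p^{\al_j})$, again forces $(a,\vp_n)=0$ for all $n$, hence $a=0$ and $u\equiv0$.

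The technically delicate point, and the one I expect to be the main obstacle, is justifying the Laplace transform manipulations rigorously at the level of the weak/mild solution: namely, that $\pa_t^{\al_j}u$ has a Laplace transform equal to $p^{\al_j}\wh u-p^{\al_j-1}a$, that the transformed equation holds pointwise in $x$ as an $H^2$ (or $\cD(\wt\cA)$) identity, and that $\wh u(\cdot,p)$ is regular enough (say in $H^2(\Om)$ or at least $H^1_{\mathrm{loc}}$) for the elliptic unique continuation theorem to apply. This requires the analyticity and the decay estimate in Lemma \ref{lem-fp-homo}, dominated convergence to exchange the transform with the (convergent) eigenfunction expansion or with $\cA$, and in Case 1 a careful statement of which unique continuation theorem for non-symmetric elliptic operators (e.g.\ the classical results of Aronszajn or Hörmander for operators with Lipschitz leading coefficients) is being invoked. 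A secondary subtlety is the passage from $u=0$ on $\om\times(0,T)$ to $u=0$ on $\om\times(0,\infty)$, which hinges on real-analyticity of $t\mapsto u(x,t)$ on $(0,\infty)$ — this is exactly what the analytic-extension part of Lemma \ref{lem-fp-homo} provides, so it should go through smoothly.
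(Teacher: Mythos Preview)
Your treatment of \textbf{Case 2} is essentially the paper's argument: eigenfunction expansion, Laplace transform, isolation of the residues at $-\la_n$, and elliptic unique continuation for $(\cA-\la_n)u_n=0$. (Your phrase ``unique continuation for the elliptic operator $\cA+\sum_jq_jp^{\al_j}$'' is a bit loose---what is actually used is UCP for the \emph{homogeneous} equation satisfied by each eigenspace projection---but the idea is right.)

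\textbf{Case 1}, however, has a real gap. After Laplace transforming you correctly obtain
\[
(\cA+B\cdot\nb+s^\al)\wh u(\,\cdot\,;s)=s^{\al-1}a\quad\mbox{in }\Om,
\]
with $\wh u(\,\cdot\,;s)=0$ on $\om$. You then propose to ``differentiate in $p$ to kill the $a$-term'' and invoke elliptic unique continuation. This does not work: differentiating once in $\eta:=s^\al$ gives $(\cA+B\cdot\nb+\eta)\pa_\eta\wh u=-\wh u$, which is still an equation with an unknown right-hand side (namely $\wh u$ on $\Om\setminus\om$), not a homogeneous one. Subtracting the equations at two values $s_1,s_2$ produces the same difficulty---the zeroth-order terms do not combine into a potential times a single unknown. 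In short, you never arrive at an equation of the form $|L w|\le C(|w|+|\nb w|)$ to which the Aronszajn/H\"ormander UCP applies, because the unknown initial datum $a$ (or $\wh u$ itself) persists as a source.

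The paper sidesteps this by a different device: it recognizes that the elliptic problem $(\cA+B\cdot\nb+\eta)v=a$ with homogeneous boundary data is exactly the Laplace transform (in the variable $\eta$) of the \emph{parabolic} problem $\pa_t u_2+\cA u_2+B\cdot\nb u_2=0$, $u_2(\,\cdot\,,0)=a$. Uniqueness for the elliptic boundary value problem then identifies $s^{1-\al}\wh u(\,\cdot\,;s)$ with $\wh u_2(\,\cdot\,;s^\al)$; hence $u_2=0$ in $\om\times(0,\infty)$, and the classical \emph{parabolic} unique continuation of Saut--Scheurer forces $u_2\equiv0$, whence $a=0$. So the missing idea is that in Case~1 one does not use elliptic UCP at all---one transfers the problem to a genuine parabolic equation and uses parabolic UCP.
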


Sakamoto and Yamamoto \cite{SY11a} proved Theorem \ref{thm-wuc} for the
symmetric single-term time-fractional diffusion equation by use of the
eigenfunction expansion and the unique continuation property for elliptic
equations. However, their method cannot work for the non-symmetric counterpart
because their argument relies heavily on the symmetry of the elliptic operator.

As an immediate application of the above property, we can give a uniqueness
result for Problem \ref{prob-isp}.

\begin{thm}\label{thm-isp}
Let $a=0$ and $F(x,t)=f(x)\,\mu(t)$, where $f\in L^2(\Om)$ and
$\mu\in C^1[0,T]$ with $\mu(0)\ne0$. Let $u$ be the solution to
\eqref{eq-ibvp-u} and $\om\subset\Om$ be an arbitrarily chosen open subdomain.
Then in either case in Theorem $\ref{thm-wuc}$, $u=0$ in $\om\times(0,T)$
implies $f=0$ in $\Om$.
\end{thm}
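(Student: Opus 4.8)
We plan to derive Theorem \ref{thm-isp} from the weak unique continuation property of Theorem \ref{thm-wuc} by means of a fractional version of Duhamel's principle. Let $w$ denote the solution of \eqref{eq-ibvp-u} with the initial value $a=f$ and vanishing source $F=0$. By Lemma \ref{lem-fp-homo}, $w\in C([0,T];L^2(\Om))\cap C((0,T];\cD(\wt\cA^\ga))$ and $w(\,\cdot\,,0)=f$. The key observation is that $w$ satisfies exactly the hypotheses of Theorem \ref{thm-wuc} in the same case (Case 1 or Case 2) that we are assuming, so it suffices to show that the observation $u=0$ in $\om\times(0,T)$ forces $w=0$ in $\om\times(0,T)$; then Theorem \ref{thm-wuc} yields $w\equiv0$ in $\Om\times(0,T)$, and letting $t\to0+$ in $C([0,T];L^2(\Om))$ gives $f=w(\,\cdot\,,0)=0$ in $L^2(\Om)$.

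The bridge between $u$ and $w$ is the identity
\[
\sum_{j=1}^mq_j\,(J_{0+}^{1-\al_j}u)(x,t)=\int_0^t\mu(t-s)\,w(x,s)\,\rd s,
\qquad(x,t)\in Q,
\]
a fractional analogue of Duhamel's formula. Formally it follows by taking the Laplace transform in $t$: since $\wh u(z)$ solves $(\sum_jq_jz^{\al_j}+\cA+B\cdot\nb)\wh u=\wh\mu(z)f$ while $\wh w(z)$ solves the same operator equation with right-hand side $(\sum_jq_jz^{\al_j-1})f$, we obtain $(\sum_jq_jz^{\al_j-1})\wh u=\wh\mu\,\wh w$, which is the Laplace transform of the displayed identity. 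Rigorously one may first establish it for $f\in C_0^\infty(\Om)$ (or $f\in\cD(\wt\cA)$), where all quantities are classical, and then pass to the limit using the continuous dependence on $f$ of both $u$ (Lemma \ref{lem-fp-weak}) and $w$ (Lemma \ref{lem-fp-homo}); extending $\mu$ to a $C^1$ function on $[0,\infty)$ handles the fact that the Laplace transform needs data on the half-line. Now if $u=0$ in $\om\times(0,T)$, then for a.e.\ $x\in\om$ we have $u(x,\,\cdot\,)=0$ on $(0,T)$, hence $J_{0+}^{1-\al_j}u(x,\,\cdot\,)\equiv0$ on $(0,T)$ for each $j$, so the left-hand side of the identity vanishes on $\om\times(0,T)$.

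Consequently $\int_0^t\mu(t-s)\,w(x,s)\,\rd s=0$ for $(x,t)\in\om\times(0,T)$, an identity in $C([0,T];L^2(\om))$ since $\mu\in C^1[0,T]$ and $w\in C([0,T];L^2(\Om))$. Differentiating in $t$ and using $\mu(0)\ne0$ turns it into the homogeneous Volterra equation of the second kind
\[
w(x,t)=-\f1{\mu(0)}\int_0^t\mu'(t-s)\,w(x,s)\,\rd s,\qquad(x,t)\in\om\times(0,T),
\]
whence $\|w(\,\cdot\,,t)\|_{L^2(\om)}\le|\mu(0)|^{-1}\|\mu'\|_{C[0,T]}\int_0^t\|w(\,\cdot\,,s)\|_{L^2(\om)}\,\rd s$, so $w=0$ in $\om\times(0,T)$ by Gronwall's inequality. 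Applying Theorem \ref{thm-wuc} to $w$ gives $w\equiv0$ in $\Om\times(0,T)$, and therefore $f=0$ in $\Om$. The only genuinely delicate step is the justification of the Duhamel identity within the weak-solution framework of Definition \ref{def-fp-weak} (in particular when $\al_1\le1/2$, where the initial trace is read from $J_{0+}^{-\al_1}u\in L^2(Q)$, and in Case 1 where no eigenfunction expansion is available); the reduction to smooth $f$ together with the a priori estimates of Lemmas \ref{lem-fp-homo}--\ref{lem-fp-weak} is what makes it rigorous. Everything afterwards---the Volterra inversion, the Gronwall estimate, and the appeal to Theorem \ref{thm-wuc}---is routine.
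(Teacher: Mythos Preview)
Your proof is correct and follows essentially the same route as the paper: both establish the identity $\sum_{j=1}^m q_j\,J_{0+}^{1-\al_j}u=\mu*w$ (the paper via its Duhamel Lemma~\ref{lem-Duhamel}, which first writes $u=\te*v$ with $\sum_j q_j J_{0+}^{1-\al_j}\te=\mu$ and then applies $\sum_j q_j J_{0+}^{1-\al_j}$, whereas you reach the same identity directly by Laplace transform and density), then differentiate in $t$, use $\mu(0)\ne0$ and Gronwall to obtain $w=0$ in $\om\times(0,T)$, and conclude by Theorem~\ref{thm-wuc}. The subsequent steps---the Volterra inversion, the Gronwall estimate, and the appeal to weak unique continuation---are identical to the paper's.
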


\Section{Proofs of Theorems \ref{thm-wuc} and \ref{thm-isp}}\label{sec-proof}

In this section, we give the proofs of Theorems \ref{thm-wuc} and
\ref{thm-isp}.

\begin{proof}[Proof of Theorem $\ref{thm-wuc}$]
According to our assumptions and Lemma \ref{lem-fp-homo}, the solution $u$ to
the initial-boundary value problem \eqref{eq-ibvp-u} can be analytically
extended from $(0,T)$ to $(0,\infty)$. For simplicity, we still denote the
extension by $u$. Then we arrive at the following initial-boundary value
problem
\begin{equation}\label{equ-u-infty}
\begin{cases}
\cP u =0 & \mbox{in }\Om\times(0,\infty),\\
u=a & \mbox{in }\Om\times\{0\},\\
u=0\mbox{ or }\pa_\cA u=0 & \mbox{on }\pa\Om\times(0,\infty),
\end{cases}
\end{equation}
and the condition $u=0$ in $\om\times(0,T)$ implies
\begin{equation}\label{eq-van}
u=0\quad\mbox{in}\quad\om\times(0,\infty)
\end{equation}
immediately. We divide the proof into the two cases described in Theorem
\ref{thm-wuc}.\medskip

{\bf Case 1}\ \ $m=1$. For simplicity, we write $\al=\al_1$. We perform the
Laplace transform (denoted by $\wh\cdot$\,) in \eqref{equ-u-infty} and use the
formula
\[
\wh{\pa_t^\al g}(s)=s^\al\,\wh g(s)-s^{\al-1}g(0+)
\]
to derive the transformed algebraic equation
\[
\begin{cases}
(\cA+s^\al)\wh u(x;s)+B(x)\cdot\nb\wh u(x;s)=s^{\al-1}a(x), & x\in\Om,\\
\wh u(x;s)=0\mbox{ or }\pa_\cA\wh u(x;s)=0, & x\in\pa\Om
\end{cases}
\]
with a parameter $s>s_1$, where $s_1>0$ is a sufficiently large constant.
Multiplying both sides of the above equation by $s^{1-\al}$ and setting
$\wh u_1(x;s):=s^{1-\al}\wh u(x;s)$, we then obtain the following boundary
value problem for an elliptic equation
\begin{equation}\label{equ-lap-v}
\begin{cases}
(\cA+s^\al)\wh u_1(x;s)+B(x)\cdot\nb\wh u_1(x;s)=a(x), & x\in\Om,\\
\wh u_1(x;s)=0\mbox{ or }\pa_\cA\wh u_1(x;s)=0, & x\in\pa\Om,
\end{cases}\quad s>s_1.
\end{equation}
On the other hand, let us consider an initial-boundary value problem for a
parabolic equation
\[
\begin{cases}
\pa_tu_2+\cA u_2+B\cdot\nb u_2=0 & \mbox{in }\Om\times(0,\infty),\\
u_2=a & \mbox{in }\Om\times\{0\},\\
u_2=0\mbox{ or }\pa_\cA u_2=0 & \mbox{on }\pa\Om\times(0,\infty).
\end{cases}
\]
Again, applying the Laplace transform yields
\[
\begin{cases}
(\cA+\eta)\wh u_2(x;\eta)+B(x)\cdot\nb\wh u_2(x;\eta)=a(x), & x\in\Om,\\
\wh u_2(x;\eta)=0\mbox{ or }\pa_\cA\wh u_2(x;\eta)=0, & x\in\pa\Om,
\end{cases}
\]
where the parameter $\eta>s_2$ and $s_2>0$ is a sufficiently large constant.
After the change of variable $\eta=s^\al$, we find
\[
\begin{cases}
(\cA+s^\al)\wh u_2(x;s^\al)+B(x)\cdot\nb\wh u_2(x;s^\al)=a(x), & x\in\Om,\\
\wh u_2(x;s^\al)=0\mbox{ or }\pa_\cA\wh u_2(x;s^\al)=0, & x\in\pa\Om,
\end{cases}\quad s^\al>s_2.
\]
In comparison with \eqref{equ-lap-v}, it follows from the uniqueness result for
boundary value problems of elliptic type that
\[
\wh u_2(x;s^\al)=\wh u_1(x;s)=s^{1-\al}\wh u(x;s),\quad
(x;s)\in\Om\times\{s>s_0\},\ s_0:=\max\{s_2^{1/\al},s_1\}.
\]
Since \eqref{eq-van} gives $\wh u(x;s)=0$ in $\om\times(0,\infty)$, we conclude
from the above identities that
\[
\wh u_2(x;\eta)=0,\quad(x;\eta)\in\om\times\{\eta>s_0^\al\}.
\]
Consequently, the uniqueness of the inverse Laplace transform indicates $u_2=0$
in $\om\times(0,\infty)$. According to the unique continuation property for
parabolic equations (see, e.g., \cite{SS87}), we conclude $u_2=0$ in
$\Om\times(0,\infty)$ and thus $a=u_2(\,\cdot\,,0)=0$ in $\Om$. Now that the
initial value vanishes, it is readily seen that $u=0$ in $\Om\times(0,\infty)$
from the uniqueness of the solution to \eqref{eq-ibvp-u}, which completes the
proof of the first part of Theorem \ref{thm-wuc}.\medskip

{\bf Case 2}\ \ $B\equiv0$, $c\ge0$ in $\Om$. Recall that in this case, we have
introduced the eigensystem $\{(\la_n,\vp_n)\}$ of the elliptic operator $\cA$.
According to the proof of \cite[Lemma 4.1]{LLY15}, the Laplace transform
$\wh u(\,\cdot\,;s)$ of the solution $u(\,\cdot\,,t)$ to \eqref{eq-ibvp-u}
reads
\[
\wh u(\,\cdot\,;s)=\f{h(s)}s\sum_{n=1}^\infty\f{(a,\vp_n)}{h(s)+\la_n}\vp_n,
\quad\rRe\,s>s_3,
\]
where $h(s):=\sum_{j=1}^mq_js^{\al_j}$ and $s_3>0$ is a sufficiently large
constant. Then it follows from \eqref{eq-van} that
\[
\f{h(s)}s\sum_{n=1}^\infty\f{(a,\vp_n)}{h(s)+\la_n}\vp_n=0
\quad\mbox{in }\om,\ \rRe\,s>s_3.
\]
Setting $\eta=h(s)$, we see that $\eta$ varies over some domain $U\subset\BC$
as $s$ varies over $\rRe\,s>s_3$. Therefore, we obtain
\begin{equation}\label{identity zero}
\sum_{n=1}^\infty\f{(a,\vp_n)}{\eta+\la_n}\vp_n=0
\quad\mbox{in }\om,\ \eta\in U.
\end{equation}
Moreover, it is readily seen that \eqref{identity zero} holds for
$\eta\in\BC\setminus\{-\la_n\}_{n=1}^\infty$. Then for any $n=1,2,\ldots$, we
can take a sufficiently small circle centered at $-\la_n$ which does not
include distinct eigenvalues, and integrating \eqref{identity zero} on this
circle yields
\[
u_n:=\sum_{\{k;\,\la_k=\la_n\}}(a,\vp_k)\vp_k=0
\quad\mbox{in }\om,\ \forall\,n=1,2,\ldots.
\]
Since $u_n$ satisfies the elliptic equation $(\cA-\la_n)u_n=0$ in $\Om$, the
unique continuation for elliptic equations implies $u_n=0$ in $\Om$ for all
$n=1,2,\ldots$. By the orthogonality of $\{\vp_n\}$ in $L^2(\Om)$, we conclude
$(a,\vp_n)=0$ for all $n=1,2,\ldots$ and thus $a=u(\,\cdot\,,0)=0$ in $\Om$,
which indicates $u=0$ in $\Om\times(0,\infty)$ again by the uniqueness of the
solution to \eqref{eq-ibvp-u}. This completes the proof of Theorem
\ref{thm-wuc}.
\end{proof}

Now let us turn to the proof of the uniqueness of the inverse source problem.
The argument is mainly based on the weak unique continuation and the following
Duhamel's principle for time-fractional parabolic equations.

\begin{lem}[Duhamel's principle]\label{lem-Duhamel}
Let $a=0$ and $F(x,t)=f(x)\,\mu(t)$, where $f\in L^2(\Om)$ and
$\mu\in C^1[0,T]$. Then the weak solution $u$ to the initial-boundary value
problem \eqref{eq-ibvp-u} allows the representation
\begin{equation}\label{eq-Duhamel}
u(\,\cdot\,,t)=\int_0^t\te(t-s)\,v(\,\cdot\,,s)\rd s,\quad0<t<T,
\end{equation}
where $v$ solves the homogeneous problem
\begin{equation}\label{equ-homo}
\begin{cases}
\cP v=0 & \mbox{in }Q,\\
v=f & \mbox{in }\Om\times\{0\},\\
v=0\mbox{ or }\pa_\cA v=0 & \mbox{on }\pa\Om\times(0,T)
\end{cases}
\end{equation}
and $\te\in L^1(0,T)$ is the unique solution to the fractional integral
equation
\begin{equation}\label{eq-FIE-te}
\sum_{j=1}^mq_jJ_{0+}^{1-\al_j}\te(t)=\mu(t),\quad0<t<T.
\end{equation}
\end{lem}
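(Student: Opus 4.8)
The plan is to verify the representation \eqref{eq-Duhamel} directly by substituting the candidate formula $u(\,\cdot\,,t)=\int_0^t\te(t-s)v(\,\cdot\,,s)\,\rd s$ into the fractional parabolic problem \eqref{eq-ibvp-u} and checking that $\cP u=f\mu$ together with the homogeneous initial and boundary conditions. First I would address the well-posedness of the auxiliary ingredients: the homogeneous problem \eqref{equ-homo} has a unique (analytic-in-time) solution $v$ by Lemma \ref{lem-fp-homo}, and the fractional integral equation \eqref{eq-FIE-te} for $\te$ should be solved by noting that $\sum_{j=1}^mq_jJ_{0+}^{1-\al_j}=J_{0+}^{1-\al_1}\bigl(I+\sum_{j=2}^mq_jJ_{0+}^{\al_1-\al_j}\bigr)$ with $q_1=1$; since $\al_1>\al_j$, the operator in parentheses is a Volterra perturbation of the identity, hence invertible on $L^1(0,T)$ with inverse given by a Neumann (resolvent-kernel) series, and applying $D_t^{1-\al_1}$-type inversion of $J_{0+}^{1-\al_1}$ to $\mu$ yields a unique $\te\in L^1(0,T)$. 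Because $\mu\in C^1[0,T]$ one in fact gets $\te$ with enough regularity for the manipulations below; the weak-solution framework of Definition \ref{def-fp-weak} and Lemma \ref{lem-fp-weak} guarantees the convolution $u$ lies in the right space.

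Next I would compute the action of each $\pa_t^{\al_j}$ on the convolution. Writing $u=\te*v$ (convolution in $t$ on $(0,t)$), one uses that the Caputo derivative $\pa_t^{\al_j}$ acts on a convolution $w*v$ by $\pa_t^{\al_j}(w*v)=w*\pa_t^{\al_j}v + w(0+)\,J_{0+}^{1-\al_j}$-type terms; more cleanly, rewrite $\pa_t^{\al_j}u = J_{0+}^{1-\al_j}\f{\rd}{\rd t}(\te*v)$ and use $\f{\rd}{\rd t}(\te*v)=\te*v' + \te\,v(0+)=\te*v' + \te\, f$ (since $v(\,\cdot\,,0)=f$). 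Then
\[
\cP u
=\sum_{j=1}^m q_j J_{0+}^{1-\al_j}\bigl(\te*v'(\,\cdot\,,\cdot\,)+\te f\bigr)
+(\cA+B\cdot\nb)(\te*v).
\]
Because the operators $J_{0+}^{1-\al_j}$ (in $t$) commute with $\cA$ and $B\cdot\nb$ (in $x$) and with convolution by $\te$, one regroups:
\[
\cP u
=\te*\Bigl(\sum_{j=1}^m q_j J_{0+}^{1-\al_j}v' + (\cA+B\cdot\nb)v\Bigr)\!\!\quad(\text{wrong grouping})
\]
— more precisely, since $v$ solves $\cP v=0$ one has $\sum_j q_j\pa_t^{\al_j}v=-( \cA+B\cdot\nb)v$, i.e. $\sum_j q_j J_{0+}^{1-\al_j}v'=-(\cA+B\cdot\nb)v$, so the $\te*(\cdots)$ contribution from $v'$ cancels exactly against $(\cA+B\cdot\nb)(\te*v)=\te*(\cA+B\cdot\nb)v$. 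What survives is the boundary-term-in-time contribution $\sum_{j=1}^m q_j J_{0+}^{1-\al_j}(\te f)=f\sum_{j=1}^m q_j J_{0+}^{1-\al_j}\te$, which by \eqref{eq-FIE-te} equals $f\mu$. Hence $\cP u=f\mu$ in $Q$. The initial condition $u(\,\cdot\,,0)=0$ follows since $\te\in L^1$ and $v\in L^\infty_{loc}$ make the convolution vanish at $t=0$; the homogeneous boundary condition is inherited from $v$ because convolution in $t$ preserves it pointwise in $x$.

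The main obstacle I anticipate is the rigorous justification of the interchange $\pa_t^{\al_j}(\te*v)=\te*\pa_t^{\al_j}v+(\text{term with }v(0+))$ within the \emph{weak} solution class: $v$ is only known to be analytic on $(0,T)$ with a possible $t^{-\al_1\ga}$ blow-up of $\|v(\,\cdot\,,t)\|_{\cD(\wt\cA^\ga)}$ near $t=0$ (Lemma \ref{lem-fp-homo}), and $\te$ is only $L^1$, so the convolutions and the commutation of the Riemann–Liouville operators with $\cA$ and $B\cdot\nb$ must be handled by an approximation/density argument (first for smooth data, then pass to the limit using the continuity estimates of Lemmas \ref{lem-fp-homo} and \ref{lem-fp-weak}). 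Once that interchange is validated, identifying $u=\te*v$ as \emph{the} weak solution is immediate from the uniqueness in Lemma \ref{lem-fp-weak}. A secondary, more routine point is confirming $\te\in L^1(0,T)$ and its uniqueness, which follows from the invertibility of the triangular Volterra system for \eqref{eq-FIE-te} as sketched above.
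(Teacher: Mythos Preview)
Your direct-verification approach is correct and is precisely the standard argument: the paper itself omits the proof of Lemma~\ref{lem-Duhamel}, citing \cite[Lemma 4.1]{LRY16} and \cite[Lemma 4.2]{L15}, whose proofs proceed exactly as you outline (solve \eqref{eq-FIE-te} via the Volterra/Neumann-series inversion you describe, then check $\cP(\te*v)=f\mu$ using associativity of convolution and $\cP v=0$). Your identification of the only genuine technical point---justifying $\f{\rd}{\rd t}(\te*v)=\te*v'+\te\,f$ and the commutation of $J_{0+}^{1-\al_j}$ with the spatial operators in the weak-solution class---is accurate, and the density/approximation route you sketch (together with uniqueness from Lemma~\ref{lem-fp-weak}) is how those references close the argument.
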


The above conclusion is almost identical to \cite[Lemma 4.1]{LRY16} for the
single-term case and \cite[Lemma 4.2]{L15} for the multi-term case, except for
the existence of non-symmetric part. Since the same argument still works in our
setting, we omit the proof here.

\begin{proof}[Proof of Theorem $\ref{thm-isp}$]
Let $u$ satisfy the initial-boundary value problem \eqref{eq-ibvp-u} with $a=0$
and $F(x,t)=f(x)\,\mu(t)$, where $f\in L^2(\Om)$ and $\mu\in C^1[0,T]$. Then
$u$ takes the form of \eqref{eq-Duhamel} according to Lemma \ref{lem-Duhamel}.
Performing the linear combination $\sum_{j=1}^mq_jJ_{0+}^{1-\al_j}$ of the
Riemann-Liouville integral operators to \eqref{eq-Duhamel}, we deduce
\begin{align*}
\sum_{j=1}^mq_jJ_{0+}^{1-\al_j}u(\,\cdot\,,t)
& =\sum_{j=1}^m\f{q_j}{\Ga(1-\al_j)}\int_0^t\f1{(t-\tau)^{\al_j}}
\int_0^\tau\te(\tau-\xi)\,v(\,\cdot\,,\xi)\,\rd\xi\rd\tau\\
& =\sum_{j=1}^m\f{q_j}{\Ga(1-\al_j)}\int_0^tv(\,\cdot\,,\xi)
\int_\xi^t\f{\te(\tau-\xi)}{(t-\tau)^{\al_j}}\,\rd\tau\rd\xi\\
& =\int_0^tv(\,\cdot\,,\xi)\sum_{j=1}^m\f{q_j}{\Ga(1-\al_j)}
\int_0^{t-\xi}\f{\te(\tau)}{(t-\xi-\tau)^{\al_j}}\,\rd\tau\rd\xi\\
& =\int_0^tv(\,\cdot\,,\xi)\sum_{j=1}^mq_jJ_{0+}^{1-\al_j}\te(t-\xi)\,\rd\xi
=\int_0^t\mu(t-\tau)\,v(\,\cdot\,,\tau)\,\rd\tau,
\end{align*}
where we applied Fubini's theorem and used the relation \eqref{eq-FIE-te}. Then
the vanishment of $u$ in $\om\times(0,T)$ immediately yields
\[
\int_0^t\mu(t-\tau)\,v(\,\cdot\,,\tau)\,\rd\tau=0\quad\mbox{in }\om,\ 0<t<T.
\]
Differentiating the above equality with respect to $t$, we obtain
\[
\mu(0)\,v(\,\cdot\,,t)+\int_0^t\mu'(t-\tau)\,v(\,\cdot\,,\tau)\rd\tau=0,
\quad\mbox{in }\om,\ 0<t<T.
\]
Owing to the assumption that $|\mu(0)|\ne0$, we estimate
\begin{align*}
\|v(\,\cdot\,,t)\|_{L^2(\om)} & \le\f1{|\mu(0)|}
\int_0^t|\mu'(t-\tau)|\|v(\,\cdot\,,\tau)\|_{L^2(\om)}\,\rd\tau\\
& \le\f{\|\mu\|_{C^1[0,T]}}{|\mu(0)|}
\int_0^t\|v(\,\cdot\,,\tau)\|_{L^2(\om)}\,\rd\tau,\quad0<t<T.
\end{align*}
Taking advantage of Gronwall's inequality, we conclude $v=0$ in
$\om\times(0,T)$. Finally, we apply Theorem \ref{thm-wuc} to the homogeneous
problem \eqref{equ-homo} to derive $v=0$ in $\Om\times(0,\infty)$, implying
$f=v(\,\cdot\,,0)=0$. This completes the proof of Theorem \ref{thm-isp}.
\end{proof}

\Section{Iterative Thresholding Algorithm}\label{sec-ita}

Based on the theoretical uniqueness result explained in the previous sections,
this section mainly aims at developing an effective numerical method for
Problem \ref{prob-isp}, that is, the numerical reconstruction of the spatial
component of the source term in a time-fractional parabolic equation.

As a representative, in the sequel we consider the initial-boundary value
problem for a single-term time-fractional diffusion equation with the
homogeneous Neumann boundary condition
\begin{equation}\label{equ-u(f)}
\begin{cases}
\pa_t^\al u(x,t)+\cA u(x,t)=f(x)\,\mu(t), & (x,t)\in Q,\\
u(x,0)=0, & x\in\Om,\\
\pa_\cA u(x,t)=0, & (x,t)\in\pa\Om\times(0,T).
\end{cases}
\end{equation}
For later use, we write the solution to \eqref{equ-u(f)} as $u(f)$ to emphasize
its dependency upon the unknown function $f$. From Lemma \ref{lem-fp-weak}, we
point out that $u(f)$ satisfies
\begin{equation}\label{eq-weak-u}
\int_Q\left(\sum_{i,j=1}^da_{ij}\,\pa_iu(f)\,\pa_jw+c\,u(f)\,w
+u(f)\,D_t^\al w\right)\rd x\rd t=\int_Qf\,\mu\,w\,\rd x\rd t
\end{equation}
for any test function $w\in H^\al(0,T;L^2(\Om))\cap L^2(0,T;H^1(\Om))$ with
$J^{1-\al}_{T-}w=0$ in $\Om\times\{T\}$, where $D_t^\al$ stands for the
backward Riemann-Liouville derivative. This is easily understood in view of
integration by parts and the following lemma.

\begin{lem}\label{lem-RL-ibp}
For $\al>0$ and $g_1,g_2\in L^2(0,T)$, there holds
\[
\int_0^T(J_{0+}^\al g_1(t))\,g_2(t)\,\rd t
=\int_0^Tg_1(t)\,J^\al_{T-}g_2(t)\,\rd t.
\]
\end{lem}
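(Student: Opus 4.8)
The statement to prove is Lemma~\ref{lem-RL-ibp}: the fractional integration by parts formula $\int_0^T(J_{0+}^\al g_1)\,g_2\,\rd t=\int_0^Tg_1\,(J^\al_{T-}g_2)\,\rd t$ for $\al>0$ and $g_1,g_2\in L^2(0,T)$.

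The plan is to proceed by a straightforward application of Fubini's theorem after writing out both sides explicitly. First I would substitute the definitions: the left-hand side is
\[
\int_0^T\left(\f1{\Ga(\al)}\int_0^t\f{g_1(\tau)}{(t-\tau)^{1-\al}}\,\rd\tau\right)g_2(t)\,\rd t,
\]
which, upon interchanging the order of integration over the triangular region $\{0<\tau<t<T\}$, becomes
\[
\f1{\Ga(\al)}\int_0^T g_1(\tau)\left(\int_\tau^T\f{g_2(t)}{(t-\tau)^{1-\al}}\,\rd t\right)\rd\tau=\int_0^T g_1(\tau)\,J^\al_{T-}g_2(\tau)\,\rd\tau,
\]
which is exactly the right-hand side. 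So the entire content of the proof is the justification of the Fubini interchange.

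The main obstacle — really the only nontrivial point — is verifying the hypothesis of Fubini's theorem, i.e., the absolute integrability of $(t,\tau)\mapsto|g_1(\tau)|\,|g_2(t)|\,(t-\tau)^{\al-1}$ over $\{0<\tau<t<T\}$. The clean way to handle this is to observe that $J_{0+}^\al$ maps $L^2(0,T)$ continuously into $L^2(0,T)$ (a standard fact, since the kernel $t^{\al-1}/\Ga(\al)$ belongs to $L^1(0,T)$ and one applies Young's convolution inequality), so $J_{0+}^\al|g_1|\in L^2(0,T)$ and hence $\int_0^T(J_{0+}^\al|g_1|)(t)\,|g_2(t)|\,\rd t<\infty$ by Cauchy--Schwarz; this double integral is precisely the integral of the absolute value of the integrand, so Fubini--Tonelli applies. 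One could alternatively argue directly via Tonelli: extend $g_1,g_2$ by zero outside $(0,T)$, bound the convolution kernel, and use that $L^2(0,T)\subset L^1(0,T)$ on the bounded interval together with Young's inequality to see the product is in $L^1$ of the square $(0,T)^2$. Either route closes the argument, and the rest is the routine substitution $t\leftrightarrow\tau$ in the iterated integral described above; I would present the $L^2$-boundedness route since it is shortest and the mapping property is needed elsewhere in the paper anyway.
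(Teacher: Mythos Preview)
Your argument is correct. The paper does not actually supply a proof of this lemma; it is stated as a known auxiliary fact and used immediately thereafter. The Fubini--Tonelli approach you outline is exactly the standard proof, and your justification of absolute integrability (via the $L^1$-kernel bound $t^{\al-1}/\Ga(\al)\in L^1(0,T)$, Young's inequality giving $J_{0+}^\al|g_1|\in L^2(0,T)$, and then Cauchy--Schwarz with $|g_2|$) is clean and complete.
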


Henceforth, we specify $f_\true\in L^2(\Om)$ as the true solution to Problem
\ref{prob-isp} and investigate the numerical reconstruction by the noise
contaminated observation data $u^\de$ in $\om\times(0,T)$ satisfying
$\|u^\de-u(f_\true)\|_{L^2(\om\times(0,T))}\le\de$, where $\de$ stands for the
noise level. For avoiding ambiguity, we interpret $u^\de=0$ out of
$\om\times(0,T)$ so that it is well-defined in $Q$.

By a classical Tikhonov regularization technique, the reconstruction of the
source term can be reformulated as the minimization of the following output
least squares functional
\begin{equation}\label{equ-min}
\min_{f\in L^2(\Om)}\Phi(f),\quad
\Phi(f):=\|u(f)-u^\de\|_{L^2(\om\times(0,T))}^2+\rho\|f\|_{L^2(\Om)}^2,
\end{equation}
where $\rho>0$ is the regularization parameter. As the majority of efficient
iterative methods do, we need the information about the Fr\'echet derivative
$\Phi'(f)$ of the objective functional $\Phi(f)$. For an arbitrarily fixed
direction $g\in L^2(\Om)$, it follows from direct calculations that
\begin{align}
\Phi'(f)g & =2\int_0^T\!\!\!\int_\om\left(u(f)-u^\de\right)(u'(f)g)\,\rd x\rd t
+2\rho\int_\Om f\,g\,\rd x\rd t\nonumber\\
& =2\int_0^T\!\!\!\int_\om\left(u(f)-u^\de\right)u(g)\,\rd x\rd t
+2\rho\int_\Om f\,g\,\rd x\rd t.\label{equ-frechet}
\end{align}
Here $u'(f)g$ denotes the Fr\'echet derivative of $u(f)$ in the direction $g$,
and the linearity of \eqref{equ-u(f)} immediately yields
\[
u'(f)g=\lim_{\ep\to0}\f{u(f+\ep g)-u(f)}\ep=u(g).
\]
Obviously, it is extremely expensive to use \eqref{equ-frechet} to evaluate
$\Phi'(f)g$ for all $g\in L^2(\Om)$, since one should solve system
\eqref{equ-u(f)} for $u(g)$ with $g$ varying in $L^2(\Om)$ in the computation
for a fixed $f$.

In order to reduce the computational costs for computing the Fr\'echet
derivatives, we follow the argument used in \cite{LJY15} to introduce the
adjoint system of \eqref{equ-u(f)}, that is, the following system for a
backward time-fractional diffusion equation
\begin{equation}\label{equ-dual}
\begin{cases}
D_t^\al z+\cA z=F & \mbox{in }Q,\\
J^{1-\al}_{T-}z=0 & \mbox{in }\Om\times\{T\},\\
\pa_\cA z=0 & \mbox{on }\pa\Om\times(0,T).
\end{cases}
\end{equation}
Parallelly to Definition \ref{def-fp-weak}, we give the definition of the weak
solution to the backward fractional diffusion equation with Riemann-Liouville
derivatives.

\begin{defi}\label{def-bp-weak}
Let $F\in L^2(Q)$. We say that $z$ is a weak solution to \eqref{equ-dual} if
\begin{alignat*}{2}
& z\in L^2(0,T;\cD(\wt\cA\,)), & \quad & D_t^\al z+\cA z=F\mbox{ in }L^2(Q),\\
& J_{T-}^{1-\al}z\in C([0,T];L^2(\Om)), & \quad
& \lim_{t\to T}\|J_{T-}^{1-\al}z(\,\cdot\,,t)\|_{L^2(\Om)}=0.
\end{alignat*}
\end{defi}

Correspondingly, we can also show the well-posedness of the solution defined
above as that in Lemma \ref{lem-fp-weak}.

\begin{lem}[Well-posedness for Definition \ref{def-bp-weak}]\label{lem-bp-weak}
Let $F\in L^2(Q)$. Then the problem \eqref{equ-dual} admits a unique weak
solution $z\in L^2(0,T;\cD(\wt\cA\,))$ such that $D_t^\al z\in L^2(Q)$.
Moreover, there exists a constant $C>0$ such that
\[
\|D_t^\al z\|_{L^2(Q)}+\|z\|_{L^2(0,T;\cD(\wt\cA\,))}\le C\|F\|_{L^2(Q)}.
\]
\end{lem}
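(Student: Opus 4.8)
The plan is to reduce \eqref{equ-dual} to a forward problem already settled in Lemma \ref{lem-fp-weak}, by means of a time reversal, rather than to repeat the appendix argument from scratch. Note first that \eqref{equ-dual} is a single-term symmetric problem ($m=1$, $B\equiv0$), so it suffices to invoke the special case of Lemma \ref{lem-fp-weak}. Set $\wt z(x,t):=z(x,T-t)$ and $\wt F(x,t):=F(x,T-t)$, and observe $\wt F\in L^2(Q)$ with $\|\wt F\|_{L^2(Q)}=\|F\|_{L^2(Q)}$, since the reflection $t\mapsto T-t$ is an isometry of $L^2(0,T)$ (hence of $L^2(Q)$ and of $L^2(0,T;\cD(\wt\cA\,))$).

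First I would record how the backward operators transform. A direct substitution $\tau\mapsto T-\tau$ in the defining integral gives
\[
J_{T-}^{1-\al}z(\,\cdot\,,t)=J_{0+}^{1-\al}\wt z(\,\cdot\,,T-t),\quad0<t<T,
\]
and consequently, differentiating in $t$,
\[
D_t^\al z(\,\cdot\,,t)=\left(\f\rd{\rd s}J_{0+}^{1-\al}\wt z\right)(\,\cdot\,,T-t),
\]
so that $D_t^\al z$ is the forward Riemann-Liouville derivative of $\wt z$ read off at the reflected time. Hence \eqref{equ-dual} is equivalent to the forward equation $\f\rd{\rd s}J_{0+}^{1-\al}\wt z+\cA\wt z=\wt F$ on $Q$, together with $\pa_\cA\wt z=0$ on $\pa\Om\times(0,T)$ and, from the transformed terminal condition, $\lim_{s\to0}J_{0+}^{1-\al}\wt z(\,\cdot\,,s)=0$ in $L^2(\Om)$. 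The key point is that this last trace condition makes the forward Riemann-Liouville derivative coincide with the Caputo derivative $\pa_s^\al\wt z$ in the weak sense of Definition \ref{def-fp-weak} with $a=0$: the two differ only through the boundary term $s^{-\al}\wt z(\,\cdot\,,0)/\Ga(1-\al)$, which is annihilated precisely by the vanishing of $J_{0+}^{1-\al}\wt z$ at $s=0$. Thus $\wt z$ is a weak solution to \eqref{eq-ibvp-u} with $a=0$ and source $\wt F$ in the sense of Definition \ref{def-fp-weak}.

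With this identification, Lemma \ref{lem-fp-weak} supplies a unique $\wt z\in L^2(0,T;\cD(\wt\cA\,))\cap H^\al(0,T;L^2(\Om))$ with $\|\wt z\|_{L^2(0,T;\cD(\wt\cA\,))}\le C\|\wt F\|_{L^2(Q)}$; reflecting back yields existence, uniqueness and the stated bound on $\|z\|_{L^2(0,T;\cD(\wt\cA\,))}$, since the reflection preserves all norms involved. The estimate on $\|D_t^\al z\|_{L^2(Q)}$ then follows at once from the equation itself, $D_t^\al z=F-\cA z$: as $z\in L^2(0,T;\cD(\wt\cA\,))$ forces $\cA z\in L^2(Q)$, one gets $\|D_t^\al z\|_{L^2(Q)}\le\|F\|_{L^2(Q)}+C\|z\|_{L^2(0,T;\cD(\wt\cA\,))}\le C\|F\|_{L^2(Q)}$. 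I expect the main obstacle to be the rigorous verification that the time reversal genuinely carries Definition \ref{def-bp-weak} onto Definition \ref{def-fp-weak}: one must check that the continuity requirement $J_{T-}^{1-\al}z\in C([0,T];L^2(\Om))$ with vanishing terminal trace corresponds exactly to the inverse-integral condition $J_{0+}^{-\al}\wt z\in L^2(Q)$ of Definition \ref{def-fp-weak} (this should be automatic, since $J_{0+}^{-\al}\wt z\in L^2$ and $J_{0+}^{1-\al}\wt z\in L^2$ force $J_{0+}^{1-\al}\wt z\in H^1(0,T;L^2(\Om))\hookrightarrow C([0,T];L^2(\Om))$ with zero trace at $s=0$), and that the weak reading of $D_t^\al$ through Lemma \ref{lem-RL-ibp} is consistent on both sides. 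Should this matching prove delicate, the fallback is to mirror the appendix proof of Lemma \ref{lem-fp-weak} line by line, replacing $J_{0+}$, $\pa_t^\al$ by $J_{T-}$, $D_t^\al$ and using Lemma \ref{lem-RL-ibp} in place of the forward integration by parts.
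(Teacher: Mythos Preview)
Your time-reversal reduction is sound and gives a clean proof. The paper, however, does not actually supply a proof of this lemma: it simply remarks that ``in a similar manner of the proof of \cite[Proposition 4.1]{F14}, one can also show Lemma \ref{lem-bp-weak} by using the eigenfunction expansion'' and omits the details. So the intended route is the direct one---expand $z(\,\cdot\,,t)=\sum_nz_n(t)\vp_n$, solve each scalar backward fractional ODE explicitly via Mittag-Leffler functions, and read off the $L^2(0,T;\cD(\wt\cA\,))$ and $D_t^\al$-bounds from the known asymptotics of $E_{\al,\al}$---rather than a reduction to Lemma \ref{lem-fp-weak}.

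Your argument is genuinely different and arguably more economical: it recycles Lemma \ref{lem-fp-weak} (in the single-term symmetric case $m=1$, $B\equiv0$) instead of redoing the Mittag-Leffler analysis, and the verification that the reflection $t\mapsto T-t$ carries Definition \ref{def-bp-weak} onto Definition \ref{def-fp-weak} is essentially the computation you sketch (the two conditions $J_{0+}^{-\al}\wt z\in L^2(Q)$ and ``$J_{0+}^{1-\al}\wt z\in C([0,T];L^2(\Om))$ with zero trace at $0$'' are indeed equivalent once the equation forces $\f\rd{\rd s}J_{0+}^{1-\al}\wt z\in L^2(Q)$, by injectivity of $J_{0+}^{1-\al}$). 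What the eigenfunction route buys is self-containment and transparency---one sees the solution formula explicitly---whereas your reduction hides the structure but avoids repetition. Either way the content is the same, since the appendix proof of Lemma \ref{lem-fp-weak} in the symmetric single-term case ultimately \emph{is} an eigenfunction expansion.
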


In a similar manner of the proof of \cite[Proposition 4.1]{F14}, one can also
show Lemma \ref{lem-bp-weak} by using the eigenfunction expansion. For
conciseness, we omit the proof here. On the other hand, from Lemma
\ref{lem-bp-weak} and integration by parts, it turns out that the solution $z$
to problem \eqref{equ-dual} satisfies
\begin{equation}\label{eq-weak-z}
\int_Q\left(\sum_{i,j=1}^da_{ij}\,\pa_iz\,\pa_jw +c\,z\,w+(D_t^\al z)\,w\right)
\rd x\rd t=\int_QF\,w\,\rd x\rd t
\end{equation}
for any test function $w\in L^2(0,T;H^1(\Om))$ with $w=0$ in $\Om\times\{0\}$.

Based on the above argument, we now introduce the adjoint system of
\eqref{equ-u(f)} associated with Problem \ref{prob-isp} as
\begin{equation}\label{equ-v(f)}
\begin{cases}
D_t^\al z+\cA z=\chi_\om\left(u(f)-u^\de\right) & \mbox{in }Q,\\
J^{1-\al}_{T-}z=0 & \mbox{in }\Om\times\{T\},\\
\pa_\cA z=0 & \mbox{on }\pa\Om\times(0,T).
\end{cases}
\end{equation}
Here $\chi_\om$ denotes the characterization function of $\om$, and we write
the solution of \eqref{equ-v(f)} as $z(f)$. Then for any $f,g\in L^2(\Om)$, it
follows from Lemma \ref{lem-RL-ibp} and Remark \ref{rem-analy} that $z(f)$ and
$u(g)$ can be taken as mutual test functions in definitions \eqref{eq-weak-u}
and \eqref{eq-weak-z}. In such a manner, we can further treat the first term in
\eqref{equ-frechet} as
\begin{align*}
& \quad\,\int_0^T\!\!\!\int_\om\left(u(f)-u^\de\right)u(g)\,\rd x\rd t
=\int_Q\chi_\om\left(u(f)-u^\de\right)u(g)\,\rd x\rd t\\
& =\int_Q\left(\sum_{i,j=1}^da_{ij}\,\pa_iz(f)\,\pa_ju(g)+c\,z(f)\,u(g)
+(D_t^\al z(f))\,u(g)\right)\rd x\rd t=\int_Qg\,\mu\,z(f)\,\rd x\rd t,
\end{align*}
implying
\[
\Phi'(f)g=2\int_\Om\left(\int_0^T\mu\,z(f)\,\rd t+\rho\,f\right)g\,\rd x,
\quad\forall\,g\in L^2(\Om).
\]
This suggests a characterization of the solution to the minimization problem
\eqref{equ-min}.

\begin{lem}
$f^*\in L^2(\Om)$ is a minimizer of the functional $\Phi(f)$ in \eqref{equ-min}
only if it satisfies the variational equation
\begin{equation}\label{j1}
\int_0^T\mu\,z(f^*)\,\rd t+\rho\,f^*=0,
\end{equation}
where $z(f^*)$ solves the backward problem \eqref{equ-v(f)} with the
coefficient $f^*$.
\end{lem}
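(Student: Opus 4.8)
The plan is to derive the variational equation \eqref{j1} directly from the first-order optimality condition $\Phi'(f^*)g = 0$ for all directions $g \in L^2(\Om)$, using the Fr\'echet derivative formula already computed in \eqref{equ-frechet} together with the adjoint-state trick developed just above the lemma statement. First I would recall that if $f^*$ minimizes the functional $\Phi$ over the Hilbert space $L^2(\Om)$, then $\Phi$ being Fr\'echet differentiable forces its derivative to annihilate every admissible variation; since $L^2(\Om)$ is a linear space with no constraints, this simply means $\Phi'(f^*)g = 0$ for all $g \in L^2(\Om)$.

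Next I would substitute the representation of the Fr\'echet derivative obtained above, namely
\[
\Phi'(f^*)g = 2\int_\Om\left(\int_0^T\mu\,z(f^*)\,\rd t+\rho\,f^*\right)g\,\rd x,
\quad\forall\,g\in L^2(\Om),
\]
which itself rests on taking $z(f^*)$ and $u(g)$ as mutual test functions in the weak formulations \eqref{eq-weak-u} and \eqref{eq-weak-z} (justified by Lemma \ref{lem-RL-ibp} and the regularity in Remark \ref{rem-analy}). Setting this expression to zero and invoking the fundamental lemma of the calculus of variations — that is, if $\int_\Om h\,g\,\rd x = 0$ for every $g \in L^2(\Om)$ then $h = 0$ in $L^2(\Om)$, taking in particular $g = h$ — yields
\[
\int_0^T\mu\,z(f^*)\,\rd t+\rho\,f^* = 0 \quad\mbox{in }L^2(\Om),
\]
which is precisely \eqref{j1}.

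The only genuine point requiring care — rather than an outright obstacle — is verifying that the adjoint identity used to rewrite the first term of \eqref{equ-frechet} is legitimate, i.e., that $z(f^*) \in H^\al(0,T;L^2(\Om)) \cap L^2(0,T;H^1(\Om))$ with $J^{1-\al}_{T-}z(f^*) = 0$ at $t=T$ so it qualifies as a test function in \eqref{eq-weak-u}, and conversely that $u(g) \in L^2(0,T;H^1(\Om))$ with $u(g) = 0$ at $t=0$ so it qualifies in \eqref{eq-weak-z}; both memberships follow from Lemmas \ref{lem-fp-weak} and \ref{lem-bp-weak} and Definitions \ref{def-fp-weak} and \ref{def-bp-weak}. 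Once that computation — already carried out in the displayed chain preceding the lemma — is accepted, the proof is a one-line application of the vanishing-of-the-Fr\'echet-derivative principle; I expect no further difficulty, and the statement being only a necessary condition (``only if'') means convexity or existence of the minimizer need not be addressed.
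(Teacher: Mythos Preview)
Your proposal is correct and follows exactly the route the paper takes: the paper derives the identity $\Phi'(f)g=2\int_\Om\big(\int_0^T\mu\,z(f)\,\rd t+\rho f\big)g\,\rd x$ via the adjoint trick and then states the lemma as an immediate consequence, without a separate proof. Your write-up simply makes explicit the standard step (vanishing Fr\'echet derivative at a minimizer plus density of $g$ in $L^2(\Om)$) that the paper leaves implicit, and your remarks on the admissibility of $z(f^*)$ and $u(g)$ as mutual test functions match the paper's justification via Lemma~\ref{lem-RL-ibp} and Remark~\ref{rem-analy}.
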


Adding $Mf^*$ ($M>0$) to both sides of \eqref{j1} and rearranging in view of
the iteration, we are led to the iterative thresholding algorithm
\begin{equation}\label{j2}
f_{k+1}=\f M{M+\rho}f_k-\f1{M+\rho}\int_0^T\mu\,z(f_k)\,\rd t,
\quad k=0,1,\ldots,
\end{equation}
where $M>0$ is a tuning parameter for the convergence. Similarly to
\cite{LJY15}, it follows from the general theory stated in \cite{DDM04} that it
suffices to choose
\begin{equation}\label{eq-cov}
M\ge\|A\|_{\mathrm{op}}^2,\quad\mbox{where}\quad\begin{aligned}
A:L^2(\Om) & \to L^2(\om\times(0,T)),\\
f & \mapsto u(f)|_{\om\times(0,T)}.
\end{aligned}
\end{equation}
At this stage, we are well prepared to propose the iterative thresholding
algorithm for the reconstruction.

\begin{algo}\label{algori-itera}
Choose a tolerance $\ve>0$, a regularization parameter $\rho>0$ and a tuning
constant $M>0$ according to \eqref{eq-cov}. Give an initial guess
$f_0\in L^2(\Om)$, and set $k=0$.
\begin{enumerate}
\item Compute $f_{k+1}$ by the iterative update \eqref{j2}.
\item If $\|f_{k+1}-f_k\|_{L^2(\Om)}/\|f_k\|_{L^2(\Om)}<\ve$, stop the
iteration. Otherwise, update $k\leftarrow k+1$ and return to Step 1.
\end{enumerate}
\end{algo}

By \cite[Theorem 3.1]{DDM04}, we see that the sequence $\{f_k\}_{k=1}^\infty$
generated by the iteration \eqref{j2} converges strongly to the solution of the
minimization problem \eqref{equ-min}. Meanwhile, we can also see from
\eqref{j2} that at each iteration step, we only need to solve the forward
problem \eqref{equ-u(f)} once for $u(f_k)$ and the backward problem
\eqref{equ-v(f)} once for $z(f^k)$ subsequently. As a result, the numerical
implementation of Algorithm \ref{algori-itera} is easy and computationally
cheap. Moreover, although \eqref{equ-v(f)} involves the backward
Riemann-Liouville derivative, we know that the solution $z(f)$ coincides with
the following problem with a backward Caputo derivative
\begin{equation}\label{eq-gov-z}
\begin{cases}
-J_{T-}^{1-\al}(\pa_tz)+\cA z=\chi_\om\left(u(f)-u^\de\right) & \mbox{in }Q,\\
z=0 & \mbox{in }\Om\times\{0\},\\
\pa_\cA z=0 & \mbox{on }\pa\Om\times(0,T),
\end{cases}
\end{equation}
thanks to the homogeneous terminal value $J_{T-}^{1-\al}z(\,\cdot\,,T)=0$.
Therefore, in the numerical simulation it suffices to deal with
\eqref{eq-gov-z} instead of \eqref{equ-v(f)} by the same forward solver for
\eqref{equ-u(f)}.

\Section{Numerical Experiments}\label{sec-numer}

In this section, we will apply the iterative thresholding algorithm established
in the previous section to the numerical treatment of Problem \ref{prob-isp} in
one and two spatial dimensions, that is, the identification of the spatial
component $f$ in the source term of the initial-boundary value problem
\eqref{equ-u(f)}.

To begin with, we assign the general settings of the reconstructions as
follows. Without loss of generality, in \eqref{equ-u(f)} we set
\[
\Om=(0,1)^d\ (d=1,2),\quad T=1,\quad\cA u=-\triangle u+u.
\]
With the true solution $f_\true\in L^2(\Om)$, we produce the noisy observation
data $u^\de$ by adding uniform random noises to the true data, i.e.,
\[
u^\de(x,t)=(1+\de\,\rand(-1,1))\,u(f_\true)(x,t),\quad(x,t)\in\om\times(0,T).
\]
Here $\rand(-1,1)$ denotes the uniformly distributed random number in $[-1,1]$
and $\de\ge0$ is the noise level. Throughout this section, we will fix the
known temporal component $\mu$ in the source term, the regularization parameter
$\rho$ and the initial guess $f_0$ as
\[
\mu(t)=1+10\pi\,t^2,\quad\rho=10^{-5},\quad f_0(x)\equiv2
\]
respectively. We shall demonstrate the reconstruction method by abundant test
examples in one and two spatial dimensions. Other than the illustrative
figures, we mainly evaluate the numerical performance by the relative
$L^2$-norm error
\[
\err:=\f{\|f_K-f_\true\|_{L^2(\Om)}}{\|f_\true\|_{L^2(\Om)}}
\]
and the number $K$ of iterations, where $f_K$ is regarded as the reconstructed
solution produced by Algorithm \ref{algori-itera}. The forward problem
\eqref{equ-u(f)} and the backward problem \eqref{eq-gov-z} involved in
Algorithm \ref{algori-itera} are solved by the numerical scheme proposed in
\cite{Lin07}, which is composed of a finite difference method in time and the
Legendre spectral method in space.

We start from the one-dimensional case. We divide the space-time region
$[0,1]\times[0,1]$ into a $40\times40$ equidistant mesh, and set the tolerance
parameter $\ve=10^{-3}$ in Algorithm \ref{algori-itera}. Except for the factors
mentioned above, we will test the numerical performance of the proposed
algorithm with different choices of true solution $f_\true$, fractional order
$\al$, noise level $\de$ and observation subdomain $\om$.

\begin{ex}\label{ex1.1}
First we fix the noise level $\de=2\%$ and the observation subdomain
$\om=(0,0.05)\cup(0.95,1)$ and test the algorithm with the following settings:

(a) $\al=0.3$, $f_\true(x)=\sin(\pi x)+x-3$, $M=2$.

(b) $\al=0.5$, $f_\true(x)=\sin(\pi x)-3/2$, $M=1$.

\noindent In Figure \ref{a1} we illustrate the comparisons of recovered
solutions with the true ones, and show the iteration steps $K$ and the relative
error $\err$.
\begin{figure}[htbp]\centering
\includegraphics[trim=1mm 2mm 12mm 6mm,clip=true,width=.45\textwidth]{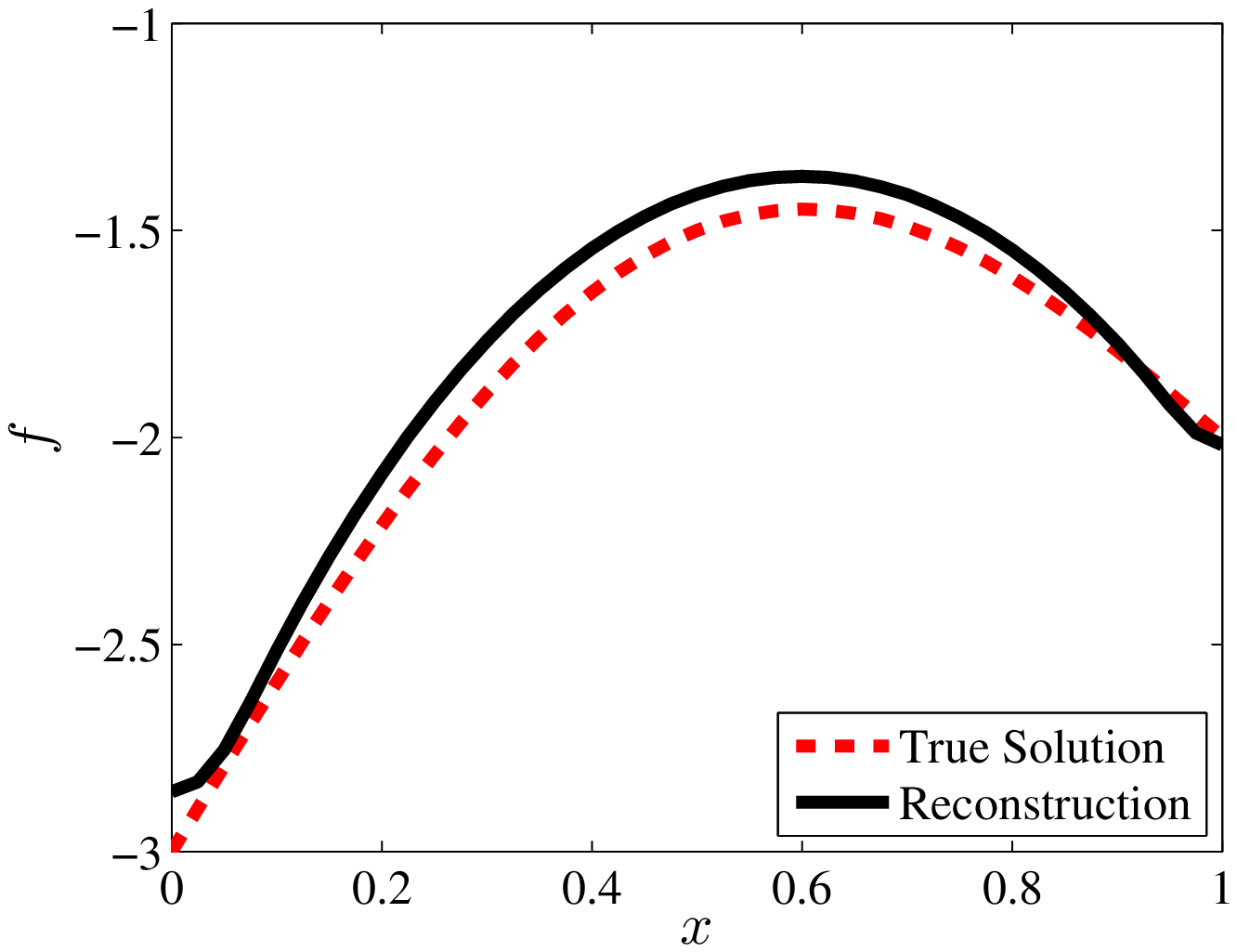}
\qquad
\includegraphics[trim=1mm 2mm 12mm 6mm,clip=true,width=.45\textwidth]{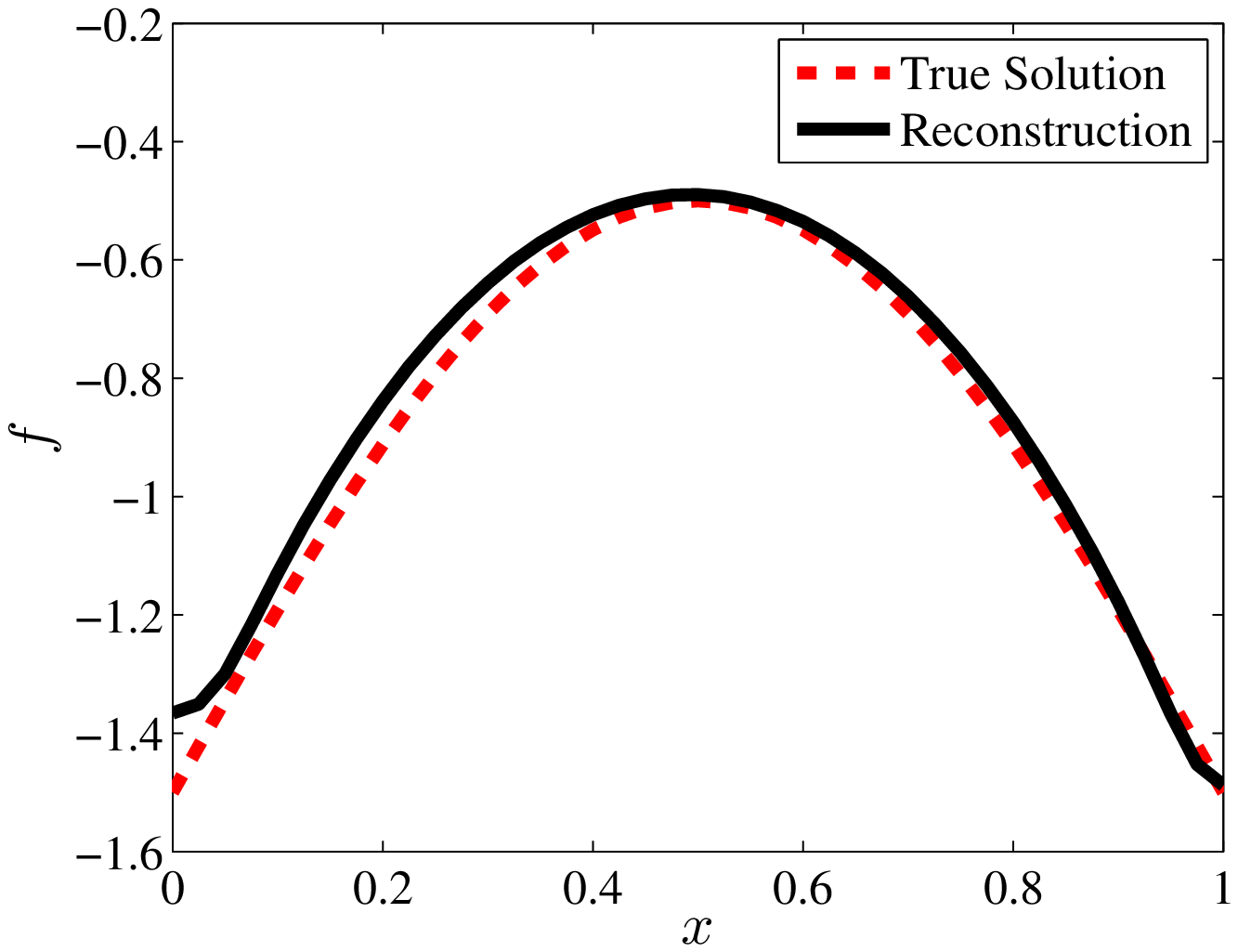}\\
\caption{True solutions $f_\true$ and their reconstructions $f_K$ obtained in
Example \ref{ex1.1}. Left: Case (a), $K=16$, $\err=4.56\%$;
Right: Case (b), $K=49$,  $\err=4.92\%$.}\label{a1}
\end{figure}
\end{ex}\vspace{-5mm}

\begin{ex}\label{ex1.5}
In this example, we fix $\al=0.8$, $M=1$ and the true solution
$f_\true(x)=-\sin(\pi x/2)-x^2+3$. Our aim is to test the numerical performance
of Algorithm \ref{algori-itera} with various choices of the noise level $\de$
and the observation subdomain $\om$ to see their influences on the
reconstructions. First we fix $\om=(0,0.05)\cup(0.95,1)$ and enlarge $\de$ from
$0.5\%$, $1\%$, $2\%$ to $4\%$. Next, we fix the noise level as $\de=2\%$ and
shrink $\om$ from $(0,0.2)\cup(0.8,1)$, $(0,0.1)\cup(0.9,1)$ to
$(0,0.025)\cup(0.975,1)$. The choices of $\de$, $\om$ in the tests and the
corresponding numerical performances are listed in Table \ref{diff}.
\begin{table}[htbp]\centering
\caption{Choices of noise levels $\de$ and observation subdomains $\om$ along
with the corresponding iteration steps $K$ and the relative errors $\err$ in
Example \ref{ex1.5}.}\label{diff}
\begin{tabular}{cc|cc}
\hline\hline
$\de$ & $\om$ & $\err$ & $K$\\
\hline
$0.5\%$ & $(0,0.05)\cup(0.95,1)$ & $2.87\%$ & $51$\\
$1\%$ & $(0,0.05)\cup(0.95,1)$ & $3.61\%$ & $51$\\
$2\%$ & $(0,0.05)\cup(0.95,1)$ & $5.38\%$ & $51$\\
$4\%$ & $(0,0.05)\cup(0.95,1)$ & $9.35\%$ & $50$\\
\hline
$2\%$ & $(0,0.2)\cup(0.8,1)$ & $4.11\%$ & $20$\\
$2\%$ & $(0,0.1)\cup(0.9,1)$ & $4.05\%$ & $31$\\
$2\%$ & $(0,0.025)\cup(0.975,1)$ & $9.89\%$ & $79$\\
\hline\hline
\end{tabular}
\end{table}
\end{ex}

We can see from Figures \ref{a1} that with different fractional orders $\al$
and a $2\%$ noise in the data, the numerical reconstruction $f_K$ appear to be
quite satisfactory in view of the highly ill-posedness of the inverse source
problem, even with very bad initial constant guesses and very small sizes of
the observation subdomain $\om$. What's more, we can observe from Table
\ref{diff} that  Algorithm \ref{algori-itera} have two important advantages,
namely, it processes strong robustness against the oscillating measurement
errors, and it is not sensitive to the smallness of the observation subdomain
$\om$.

Now we proceed to the more challenging two-dimensional case, where we divide
the space-time region $\ov\Om\times[0,T]=[0,1]^2\times[0,1]$ into a
$40^2\times40$ equidistant mesh. Similarly to the one-dimensional examples, we
will test the numerical performance of Algorithm \ref{algori-itera} from
various aspects, including different choices of true solutions, noise levels
and observation subdomains. For simplicity, we unify the tuning parameter in
Algorithm \ref{algori-itera} as $M=2$ in the following examples.

\begin{ex}\label{ex2.1}
Fix the noise level as $\de=1\%$. We choose the observation subdomain and the
tolerance parameter as $\om=\Om\setminus[0.1,0.9]^2$ and $\ve=\de/3$,
respectively. We specify two pairs of fractional orders and true solutions as
follows.

(a) $\al=0.3$, $f_\true(x)=f_\true(x_1,x_2)=x_1+x_2+1$.

(b) $\al=0.5$, $f_\true(x)=\cos(\pi x_1)\cos(\pi x_2)+2$.

\noindent Parallelly to Example \ref{ex1.1}, we compare the recovered solutions
with the true ones, and show the iteration steps $K$ and the relative error
$\err$ in Figure \ref{b1}.
\begin{figure}[htbp]\centering
\includegraphics[trim=1mm 5mm 9mm 8mm,clip=true,width=.45\textwidth]{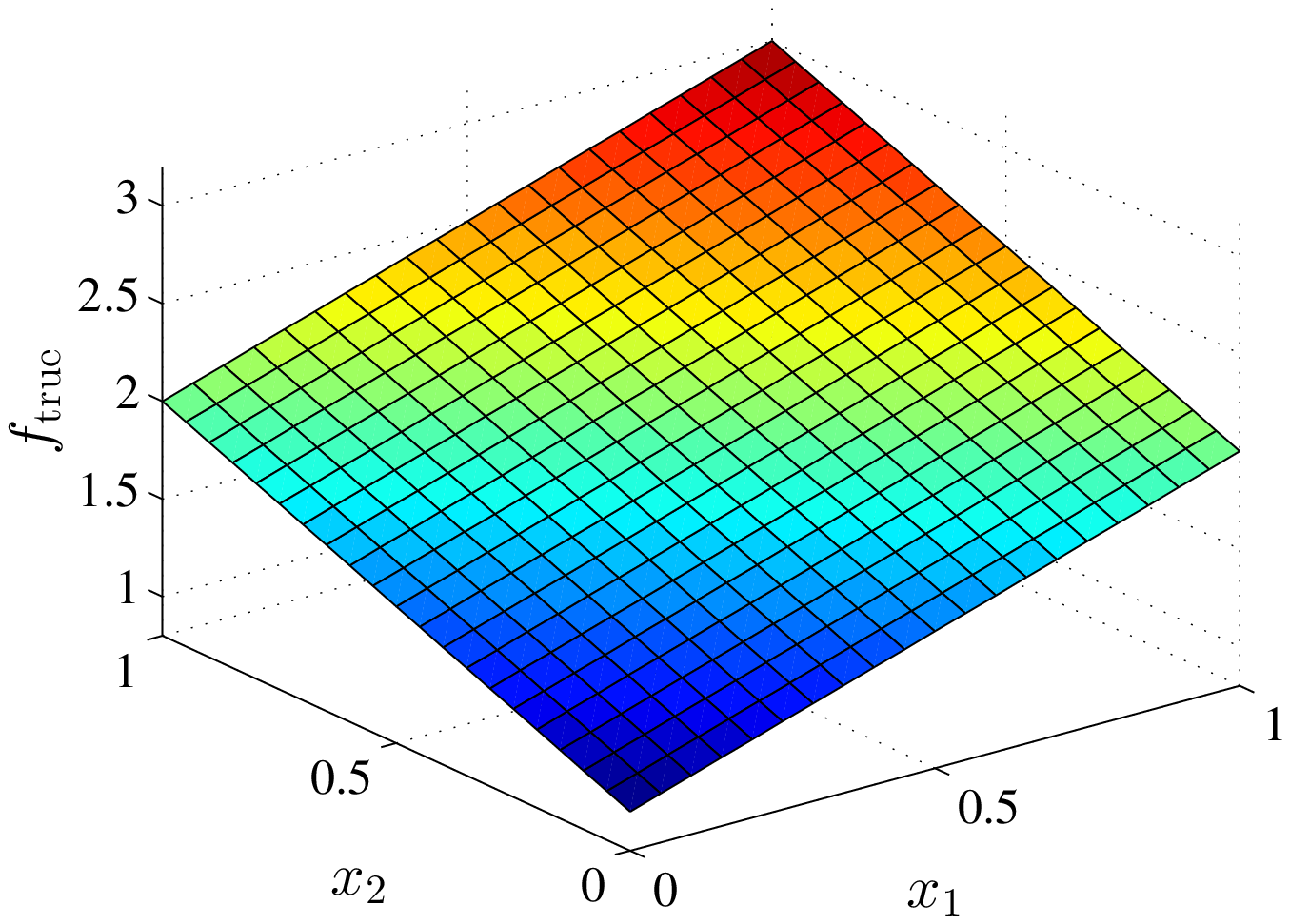}
\qquad
\includegraphics[trim=1mm 5mm 9mm 8mm,clip=true,width=.45\textwidth]{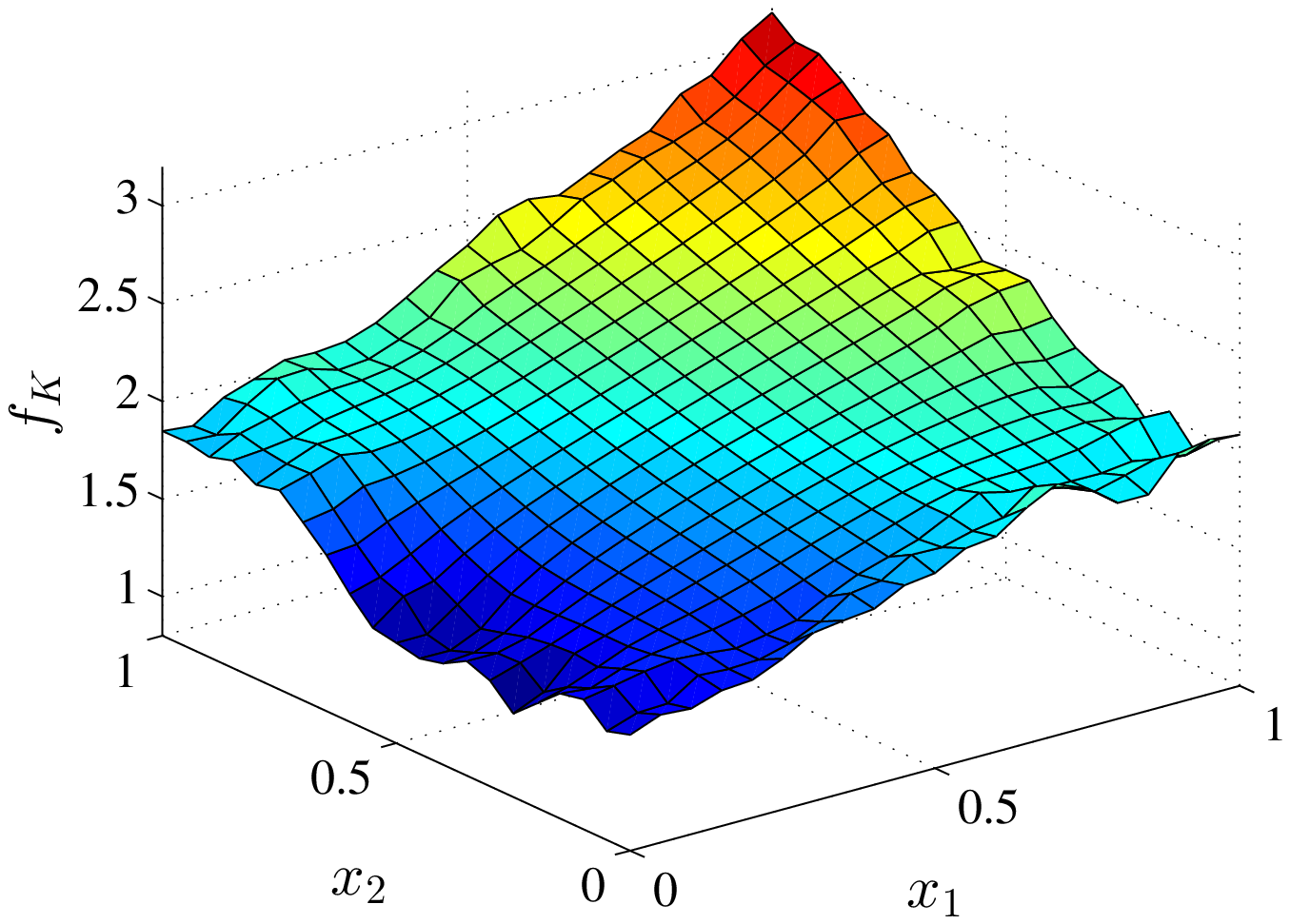}\\[5mm]
\includegraphics[trim=1mm 5mm 9mm 8mm,clip=true,width=.45\textwidth]{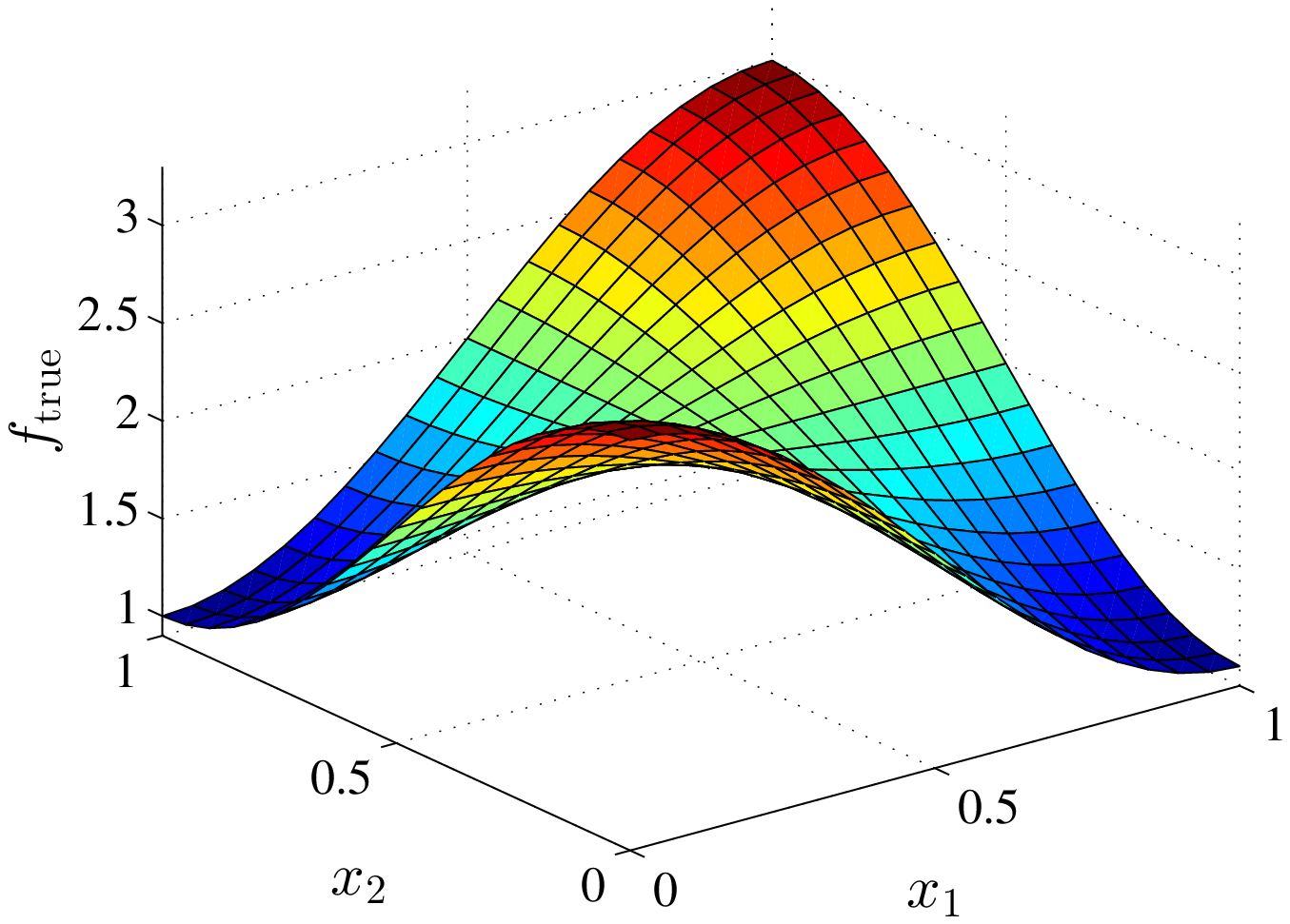}
\qquad
\includegraphics[trim=1mm 5mm 9mm 8mm,clip=true,width=.45\textwidth]{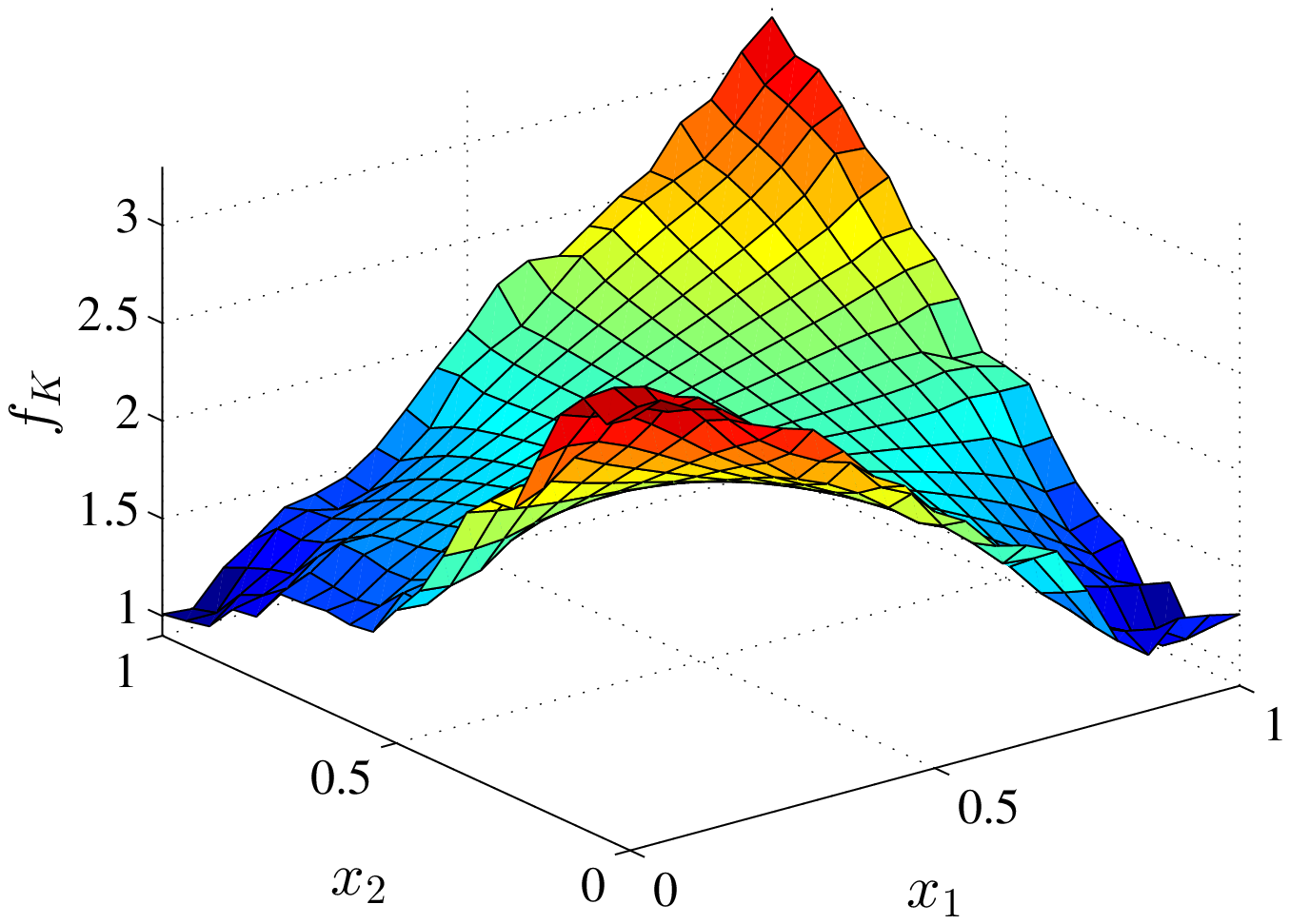}\\
\caption{True solutions $f_\true$ (left) and their reconstructions $f_K$
(right) obtained in Example \ref{ex2.1}. Top: Case (a), $K=21$, $\err=6.21\%$;
Bottom: Case (b), $K=36$, $\err=7.17\%$.}\label{b1}
\end{figure}
\end{ex}

\begin{ex}\label{ex2.3}
The aim of this example is the same as that of Example \ref{ex1.5}, that is, to
see the behavior of the reconstructions with respect to various choices of
noise levels $\de$ and observation subdomains $\om$. To this end, we fix the
fractional order $\al=0.8$ and the true solution
$f_\true(x)=\exp((x_1+x_2)/2)+1$, and choose the tolerance parameter as
$\ve=\de/5$. First, we fix $\om=\Om\setminus[0.1,0.9]^2$ as that in the
previous example, and change the noise levels as $\de=0.5\%$, $1\%$, $2\%$ and
$4\%$. Next, we fix $\de=1\%$ and take $\om$ as $\Om\setminus[0.1,0.8]^2$,
$\Om\setminus[0.05,0.95]^2$ and $\Om\setminus[0,0.9]\times[0.1,0.9]$.
Especially, we see that in the last choice, $\om$ only covers three edges of
$\pa\Om$. We list the choices of $\de$, $\om$ in the tests and the
corresponding numerical performances in Table \ref{diff2}.
\begin{table}[htbp]\centering
\caption{Choices of noise levels $\de$ and observation subdomains $\om$ along
with the corresponding iteration steps $K$ and the relative errors $\err$ in
Example \ref{ex2.3}.}\label{diff2}
\centering\begin{tabular}{cc|cc}
\hline\hline
$\de$ & $\om$ & $\err$ & $K$\\
\hline
$0.5\%$ & $\Om\setminus[0.1,0.9]^2$ & $3.25\%$ & $35$\\
$1\%$ & $\Om\setminus[0.1,0.9]^2$ & $4.69\%$ & $26$\\
$2\%$ & $\Om\setminus[0.1,0.9]^2$ & $7.11\%$ & $17$\\
$4\%$ & $\Om\setminus[0.1,0.9]^2$ & $10.31\%$ & $8$\\
\hline
$1\%$ & $\Om\setminus[0.1,0.8]^2$ & $3.63\%$ & $21$\\
$1\%$ & $\Om\setminus[0.05,0.95]^2$ & $6.70\%$ & $42$\\
$1\%$ & $\Om\setminus[0,0.9]\times[0.1,0.9]$ & $5.46\%$ & $22$\\
\hline\hline
\end{tabular}
\end{table}
\end{ex}

It can be readily seen from the above two-dimensional examples that the
iterative thresholding algorithm shows almost the same numerical performances
as that in the one-dimensional case. As expected, the proposed algorithm
demonstrates a strong robustness against oscillating noises in the observation
data and a certain insensitivity to the smallness of the observation subdomain.
Nevertheless, we point out that the reconstructions here are not as accurate as
that in \cite{LJY15}, where a similar iterative method was applied to an
inverse source problem for hyperbolic-type equations. The reason most probably
roots in the underlying ill-posedness of Problem \ref{prob-isp} for fractional
parabolic equations, which is severer than that for hyperbolic ones.

\Section{Concluding Remarks}

In Theorem \ref{thm-isp}, we only proved the uniqueness result for the inverse
source problem. In comparison, it is known that conditional stability results
hold for the same inverse problems for parabolic or hyperbolic equations based
on Carleman estimates or the multiplier method. Unfortunately, such techniques
do not work in the case of fractional diffusion equations due to the absence of
the fundamental integration by parts for the fractional derivatives. This is
also a direct reason why the unique continuation was only established in the
weak sense (see Theorem \ref{thm-wuc}, Cheng et al. \cite{CLN13}, Lin and
Nakamura \cite{LN16}) and Xu et al. \cite{XCY11}).

In the numerical aspect, we reformulate Problem \ref{prob-isp} as a
minimization problem in the typical situation in the case of $B\equiv0$ and
$m=1$. Then we characterize the minimizer by a variational with the help of the
corresponding adjoint problem of \eqref{eq-ibvp-u}, which results in the
desired iterative thresholding algorithm. Then several numerical experiments
for the reconstructions are implemented to show the efficiency and accuracy of
the proposed Algorithm \ref{algori-itera}. Here we point out that in case of
the homogeneous Neumann boundary condition, it is necessary to assume $B=0$ in
Algorithm \ref{algori-itera}, since the adjoint system used to derive our
algorithm heavily relies on the symmetry of problem \eqref{equ-u(f)}. The
algorithm for the non-symmetric case remains open.

\appendix
\Section{Proof of Lemma \ref{lem-fp-weak}}\label{sec-app}

In this appendix, we provide the proof of Lemma \ref{lem-fp-weak}, namely, the
well-posedness of the weak solution to the inhomogeneous problem
\eqref{eq-ibvp-u} in the sense of fractional Sobolev spaces in time. To this
end, we introduce the usual Mittag-Leffler function (see, e.g.,
\cite[\S1.2.1]{P99})
\[
E_{\al,\be}(\ze):=\sum_{k=1}^\infty\f{\ze^k}{\Ga(\al k+\be)},
\quad\ze\in\BC,\ \al>0,\ \be\in\BR,
\]
by which we define a collection of solution operators $\{S_\al(t)\}_{t>0}$ as
\begin{equation}\label{def-S(t)}
\begin{aligned}
S_\al(t):L^2(\Om) & \to L^2(\Om),\\
\psi & \mapsto-t^{\al-1}\sum_{n=1}^\infty
E_{\al,\al}(-\la_nt^\al)(\psi,\vp_n)\vp_n.
\end{aligned}
\end{equation}
Moreover, it follows from \cite[Theorem 1.6]{P99} that there exists a constant
$C>0$ such that
\begin{equation} \label{esti-S(t)}
\|S_\al(t)\|_{\mathrm{op}}\le C\,t^{\al-1},\quad t>0.
\end{equation}

We are in a position to give the proof of Lemma \ref{lem-fp-weak}.

\begin{proof}[Proof of Lemma $\ref{lem-fp-weak}$]
Let $a=0$ and $F\in L^2(Q)$. Without loss of generality, we only treat the
multi-term case, i.e., $m\ge2$. Henceforth, $C>0$ denotes generic constants
which may change from line to line.

Regarding the terms of lower fractional orders and advection as the new source
terms, we can argue similarly as that in the proof in \cite{LY13} to see that
the solution formally satisfies the integral equation
\[
u(\,\cdot\,,t)=(\cK-\cL)u(\,\cdot\,,t)-\Psi(\,\cdot\,,t),
\]
where
\begin{align*}
& \cK:=\sum_{j=2}^mq_j\cK_j,\quad\cK_ju(\,\cdot\,,t):=\int_0^t
S_{\al_1}(t-\tau)\pa_\tau^{\al_j}u(\,\cdot\,,\tau)\,\rd\tau\ (j=2,\ldots,m),\\
& \cL u(\,\cdot\,,t):=\int_0^tS_{\al_1}(t-\tau)B\cdot\nb u(\,\cdot\,,\tau)\,
\rd\tau,\quad\Psi(\,\cdot\,,t):=\int_0^tS_{\al_1}(t-\tau)F(\,\cdot\,,\tau)\,
\rd\tau.\label{def-g}
\end{align*}
In the sequel, for $\eta\in(0,T]$ we define the space $X_\eta$ and its norm
$\|\cdot\|_{X_\eta}$ as
\[
X_\eta:=H^{\al_1}(0,\eta;L^2(\Om))\cap L^2(0,\eta;H^2(\Om)),\quad
\|\cdot\|_{X_\eta}:=\|\cdot\|_{H^{\al_1}(0,\eta;L^2(\Om))}
+\|\cdot\|_{L^2(0,\eta;H^2(\Om))},
\]
respectively. Recalling the operator $\wt\cA$ introduced in Section
\ref{sec-pre}, we have $\wt\cA\Psi=\cA\Psi+\Psi$, where
\[
\cA\Psi(\,\cdot\,,t)=\int_0^t(t-\tau)^{\al_1-1}\sum_{n=1}^\infty\la_n
E_{\al_1,\al_1}(-\la_n(t-\tau)^{\al_1})(F(\,\cdot\,,\tau),\vp_n)\vp_n\,\rd s
\]
by definition \eqref{def-S(t)}. First it follows immediately from
\eqref{esti-S(t)} and Young's inequality that
\[
\|\Psi\|_{L^2(\Om\times(0,\eta))}\le C\|F\|_{L^2(\Om\times(0,\eta))}.
\]
To estimate $\cA\Psi$, we take advantage of the basic properties of
Mittag-Leffler functions (see, e.g., \cite[Lemma 3.3]{SY11a}) to deduce
\begin{align*}
\la_n\int_0^\eta t^{\al_1-1}|E_{\al_1,\al_1}(-\la_nt^{\al_1})|\,\rd t
& =\la_n\int_0^\eta t^{\al_1-1}E_{\al_1,\al_1}(-\la_nt^{\al_1})\,\rd t\\
& =-\int_0^\eta\f\rd{\rd t}E_{\al_1,1}(-\la_nt^{\al_1})\,\rd t
=1-E_{\al_1,1}(-\la_n\eta^{\al_1}).
\end{align*}
From the boundedness of $E_{\al_1,1}(-\la_n\eta^{\al_1})$ and Young's
inequality, we obtain
\[
\|\cA\Psi\|_{L^2(\Om\times(0,\eta))}\le\sum_{n=1}^\infty
\left(\int_0^\eta|(F(\,\cdot\,,t),\vp_n)|^2\,\rd t\right)^{1/2}
(1-E_{\al_1,1}(-\la_n\eta^{\al_1}))\le C\|F\|_{L^2(\Om\times(0,\eta))},
\]
indicating
\[
\|\Psi\|_{L^2(0,\eta;H^2(\Om))}\le C\|F\|_{L^2(\Om\times(0,\eta))}.
\]
Now by an argument similar to the proof of \cite[Theorem 4.1]{GLY15}, we obtain
\[
\|\Psi\|_{H^{\al_1}(0,\eta;L^2(\Om))}\le C\|F\|_{L^2(\Om\times(0,\eta))}.
\]

Next we proceed to show that $\cK-\cL:X_\eta\to X_\eta$ is compact. In fact,
according to \cite[Theorem 4.2]{GLY15}, we have
\begin{equation}\label{esti-Lu}
\|\cL w\|_{X_\eta}\le C\|B\cdot\nb w\|_{L^2(\Om\times(0,\eta))}
\le C\|w\|_{L^2(0,\eta;H^1(\Om))},\quad\forall\,w\in X_\eta,
\end{equation}
that is, $\cL:L^2(0,\eta;H^1(\Om))\to X_\eta$ is bounded. Since the embedding
$X_\eta\to L^2(0,\eta;H^1(\Om))$ is compact, we immediately obtain the
compactness of the operator $\cL:X_\eta\to X_\eta$. On the other hand, by
$1>\al_1>\al_2>\cdots>\al_m>0$, we see
\begin{equation}\label{esti-Kj}
\|\cK_jw\|_{X_\eta}\le C\|w\|_{H^{\al_j}(0,\eta;L^2(\Om))}\le
C\|w\|_{H^{\al_2}(0,\eta;L^2(\Om))},\quad\forall\,w\in X_\eta,\ j=2,\ldots,m,
\end{equation}
where the constant $C>0$ is independent of $\eta\in(0,T)$ (see
\cite[p.434]{SY11a}). Meanwhile, the embedding
$X_\eta\to H^{\al_2}(0,\eta;L^2(\Om))$ is compact (see Temam
\cite[Chapter III, \S2]{T79}, or one can prove directly similarly to
Baumeister \cite[Chapter 5]{B87}), which yields the compactness of
$\cK=\sum_{j=2}^mq_j\cK_j:X_\eta\to X_\eta$ and thus the compactness of
$\cK-\cL:X_\eta\to X_\eta$.

Now we attempt to verify that $1$ is not an eigenvalue of $\cK-\cL$, that is,
$(\cK-\cL)w=w$ in $X_\eta$ implies $w=0$. First we prove
\begin{equation}\label{esti-partial_beta}
\|\pa_t^\be g\|_{L^2(0,\eta)}
\le C\,\eta^{\al_1-\be}\|\pa_t^{\al_1}g\|_{L^2(0,\eta)},
\quad\forall\,\be\in[0,\al_1),\ \forall\,g\in\cR(J_{0+}^{\al_1}).
\end{equation}
Indeed, since $J_{0+}^\ga$ is defined by the fractional power for $\ga\in\BR$,
if follows that (see Pazy \cite[Theorem 6.8]{P83})
\[
J_{0+}^{-\be}g=J_{0+}^{\al_1-\be}(J_{0+}^{-\al_1}g),
\quad g\in\cR(J_{0+}^{\al_1})
\]
and thus
\[
\|\pa_t^\be g\|_{L^2(0,\eta)}=\|J_{0+}^{-\be}g\|_{L^2(0,\eta)}
=\|J_{0+}^{\al_1-\be}(J_{0+}^{-\al_1}g)\|_{L^2(0,\eta)},
\quad g\in\cR(J_{0+}^{\al_1}).
\]
On the other hand, by Young's inequality, there holds for
$g\in\cR(J_{0+}^{\al_1})\subset H^{\al_1}(0,\eta)$ that
\begin{align*}
\|J_{0+}^{\al_1-\be}g\|_{L^2(0,\eta)} & =\f1{\Ga(\al_1-\be)}
\left\|\int^t_0(t-\tau)^{(\al_1-\be)-1}g(\tau)\,\rd\tau\right\|_{L^2(0,\eta)}\\
& \le\f1{\Ga(\al_1-\be)}\int_0^\eta t^{(\al_1-\be)-1}\,\rd t
\left(\int_0^\eta|g(t)|^2\,\rd t\right)^{1/2}
=\f{\eta^{\al_1-\be}}{\Ga(\al_1-\be+1)}\|g\|_{L^2(0,\eta)},
\end{align*}
implying
\[
\|\pa_t^\be g\|_{L^2(0,\eta)}
\le C\,\eta^{\al_1-\be}\|J_{0+}^{-\al_1}g\|_{L^2(0,\eta)}
=C\,\eta^{\al_1-\be}\|\pa_t^{\al_1}g\|_{L^2(0,\eta)}
\]
or equivalently \eqref{esti-partial_beta}. Using \eqref{esti-Kj} and
\eqref{esti-partial_beta}, we estimate
\begin{align}
\|\cK_jw\|_{X_\eta} & \le C\|\pa_t^{\al_j}w\|_{L^2(\Om\times(0,\eta))}
\le C\,\eta^{\al_1-\al_j}\|\pa_t^{\al_1}w\|_{L^2(\Om\times(0,\eta))}\nonumber\\
& \le C\,\eta^{\al_1-\al_2}\|w\|_{H^{\al_1}(0,\eta;L^2(\Om))}
\le C\,\eta^{\al_1-\al_2}\|w\|_{X_\eta},
\quad\forall\,w\in X_\eta,\ j=2,\ldots,m.\label{eq-est-Kj}
\end{align}
Especially, taking $\be=0$ in \eqref{esti-partial_beta}, we obtain
\[
\|w\|_{L^2(\Om\times(0,\eta))}
\le C\,\eta^{\al_1}\|\pa_t^{\al_1}w\|_{L^2(\Om\times(0,\eta))}
\le C\,\eta^{\al_1}\|w\|_{X_\eta},\quad\forall\,w\in X_\eta.
\]
Applying the above estimate and the interpolation inequality to
\eqref{esti-Lu}, we see that for any $\ep>0$, there exists a constant
$C_\ep>0$ such that
\begin{align*}
\|\cL w\|_{X_\eta} & \le C\|w\|_{L^2(0,\eta;H^1(\Om))}
\le\ep\|w\|_{L^2(0,\eta;H^2(\Om))}+C_\ep\|w\|_{L^2(\Om\times(0,\eta))}\\
& \le(\ep+CC_\ep\,\eta^{\al_1})\|w\|_{X_\eta},\quad\forall\,w\in X_\eta.
\end{align*}
This, together with the estimate \eqref{eq-est-Kj}, implies
\begin{align*}
\|(\cK-\cL)w\|_{X_\eta}
& \le\sum_{j=2}^mq_j\|\cK_jw\|_{X_\eta}+\|\cL w\|_{X_\eta}
\le\left(C\,\eta^{\al_1-\al_2}+\ep+CC_\ep\,\eta^{\al_1}\right)\|w\|_{X_\eta},
\quad\forall\,w\in X_\eta.
\end{align*}
Fixing $0<\ep<1$ arbitrarily, we can choose a sufficiently small $\eta_\ep>0$
so that
\[
C\,\eta_\ep^{\al_1-\al_2}+\ep+CC_\ep\,\eta_\ep^{\al_1}<1.
\]
Consequently, if $w =(\cK-\cL)w$ in $X_{\eta_\ep}$, then the only possibility
is $w=0$ in $\Om\times(0,\eta_\ep)$, indicating that $1$ is not an eigenvalue
of $\cK-\cL$ on $X_{\eta_\ep}$.

In the final step, we continue this argument over $\eta_\ep$ to show that
$w=(\cK-\cL)w$ in
$X_{2\eta_\ep}=H^{\al_1}(0,2\eta_\ep;L^2(\Om))\cap L^2(0,2\eta_\ep;H^2(\Om))$
implies $w=0$ in $\Om\times(0,2\eta_\ep)$. To this end, we investigate
$\wt w(\,\cdot\,,t):=w(\,\cdot\,,t+\eta_\ep)$. Now that $w=0$ in
$\Om\times(0,\eta_\ep)$, formally we calculate
\begin{align*}
\pa_t^{\al_j}w(\,\cdot\,,t+\eta_\ep)
& =\f1{\Ga(1-\al_j)}\int_{\eta_\ep}^{t+\eta_\ep}
\f{\pa_\tau w(\,\cdot\,,\tau)}{(t+\eta_\ep-\tau)^{\al_j}}\,\rd\tau
=\f1{\Ga(1-\al_j)}\int_0^t
\f{\pa_\xi w(\,\cdot\,,\xi+\eta_\ep)}{(t-\xi)^{\al_j}}\,\rd\xi\\
& =\f1{\Ga(1-\al_j)}\int_0^t
\f{\pa_\xi\wt w(\,\cdot\,,\xi)}{(t-\xi)^{\al_j}}\,\rd\xi
=\pa_t^{\al_j}\wt w(\,\cdot\,,t),\quad t>0.
\end{align*}
By the definition of $\cK_j$, we employ again the fact that $w=0$ in
$\Om\times(0,\eta_\ep)$ to deduce
\begin{align*}
\cK_jw(\,\cdot\,,t+\eta_\ep) & =\int_0^{t+\eta_\ep}
S_{\al_1}(t+\eta_\ep-\tau)\pa_\tau^{\al_j}w(\,\cdot\,,\tau)\,\rd\tau
=\int_{\eta_\ep}^{t+\eta_\ep}
S_{\al_1}(t+\eta_\ep-\tau)\pa_\tau^{\al_j}w(\,\cdot\,,\tau)\,\rd\tau\\
& =\int_0^tS_{\al_1}(t-\xi)\pa_\xi^{\al_j}w(\,\cdot\,,\xi+\eta_\ep)\,\rd\xi
=\int_0^tS_{\al_1}(t-\xi)\pa_\xi^{\al_j}\wt w(\,\cdot\,,\xi)\,\rd\xi\\
& =\cK_j\wt w(\,\cdot\,,t),\quad t>0,\ j=2,\ldots,m.
\end{align*}
Similarly, we obtain
\[
\cL w(\,\cdot\,,t+\eta_\ep)=\cL\wt w(\,\cdot\,,t),\quad t>0.
\]
Eventually, we collect the above equalities to conclude
\begin{align*}
\wt w(\,\cdot\,,t) & =w(\,\cdot\,,t+\eta_\ep)
=\sum_{j=2}^mq_j\cK_jw(\,\cdot\,,t+\eta_\ep)-\cL w(\,\cdot\,,t+\eta_\ep)\\
& =\sum_{j=2}^mq_j\cK_j\wt w(\,\cdot\,,t)-\cL\wt w(\,\cdot\,,t)
=(\cK-\cL)\wt w(\,\cdot\,,t),\quad t>0.
\end{align*}
Therefore, the same argument for $w\in X_\eta$ immediately yields $\wt w=0$ in
$\Om\times(0,\eta_\ep)$ and thus $w=0$ in $\Om\times(0,2\eta_\ep)$. Since the
step size $\eta_\ep$ is a positive constant, we can repeat the same argument
finite times to reach the conclusion that $w=(\cK-\cL)w$ in
$X_T=H^{\al_1}(0,T;L^2(\Om))\cap L^2(0,T;H^2(\Om))$ implies $w=0$ in
$Q=\Om\times(0,T)$.

Consequently, by the Fredholm alternative, we complete the proof of Lemma
\ref{lem-fp-weak}.
\end{proof}


{\bf Acknowledgement}\ \ The work was supported by A3 Foresight Program
``Modeling and Computation of Applied Inverse Problems'', Japan Society of the
Promotion of Science (JSPS). The first author is supported by self-determined
research funds of CCNU from the colleges' basic research and operation of MOE
(No.\! CCNU14A05039), National Natural Science Foundation of China (Nos.\!
11326233, 11401241 and 11571265). The second author is partially supported by
the Program for Leading Graduate Schools, MEXT, Japan. The other authors are
partially supported by Grant-in-Aid for Scientific Research (S) 15H05740, JSPS.

\end{document}